\theoremstyle{plain}
\newtheorem{lem}{Lemma}[section]
\newtheorem{cor}[lem]{Corollary}
\newtheorem{prop}[lem]{Proposition}
\newtheorem{thm}[lem]{Theorem}
\newtheorem*{mthm*}{Theorem}
\theoremstyle{definition}
\newtheorem{ex}[lem]{Example}
\newtheorem{question}[lem]{Question}
\newtheorem{para}[lem]{}
\newtheorem*{convention*}{Convention}
\newcommand{\id}{\operatorname{id}}
\newcommand{\HH}{\operatorname{H}}
\newcommand{\End}{\operatorname{End}}
\newcommand{\Ker}{\operatorname{Ker}}
\newcommand{\xra}{\xrightarrow}
\newcommand{\e}{\mathbf{e}}
\renewcommand{\geq}{\geqslant}
\renewcommand{\leq}{\leqslant}
\renewcommand{\ker}{\Ker}
\newcommand{\Hom}{\operatorname{Hom}}
\def\e{\mathrm{e}}
\def\E{\mathcal{E}}
\def\C{\mathcal{C}}
\def\K{\mathcal{K}}
\def\Der{\mathrm{Der}}
\def\End{\mathrm{End}}
\newcommand{\gr}{\mathsf{Gr}}
\def\SDer{{}^*\Der}
\def\SHom{{}^*\!\Hom} 
\def\SEnd{{}^*\End} 
\def\SConn{{}^*\mathrm{Conn}}
\newcommand{\N}{\mathbb{N}}
\newcommand{\Q}{\mathbb{Q}}
\def\Conn{\mathrm{Conn}}
\numberwithin{equation}{lem}
\begin{document}

\bibliographystyle{amsplain}

\title[Connections and na\"{i}ve lifting of DG modules]{Connections and na\"{i}ve lifting of DG modules}

\author{Saeed Nasseh}
\address{Department of Mathematical Sciences\\
Georgia Southern University\\
Statesboro, GA 30460, U.S.A.}
\email{snasseh@georgiasouthern.edu}

\author{Maiko Ono}
\address{Department of Mathematics, Okayama University, 3-1-1 Tsushima-naka, Kita-ku, Okayama 700-8530, Japan}
\email{onomaiko.math@okayama-u.ac.jp}

\author{Yuji Yoshino}
\address{Graduate School of Environmental, Life, Natural Science and Technology, Okayama University, Okayama 700-8530, Japan}
\email{yoshino@math.okayama-u.ac.jp}

\thanks{Y. Yoshino was supported by JSPS Kakenhi Grants 19K03448 and 24K06690.}

%\dedicatory{}

\keywords{Atiyah map, connection, curvature, derivation, DG algebra, DG module, diagonal ideal, differential module, enveloping DG algebra, free DG module, integrable connection, Kodaira-Spencer homomorphism, module of K\"{a}hler differentials, na\"{i}ve lifting, semifree differential module, semifree resolution, topological DG module.}
\subjclass[2020]{13N15, 16E45, 16W25, 16W50, 16W80.}

\begin{abstract}
In this paper, we generalize the notion of connections, which was introduced by Alain Connes in noncommutative differential geometry, to the differential graded (DG) homological algebra setting. Then, along a DG algebra homomorphism $A \to B$, where $B$ is assumed to be projective as an underlying graded $A$-module, we give necessary and sufficient conditions for a semifree DG $B$-module to be na\"{i}vely liftable to $A$ in terms of connections.
\end{abstract}

\maketitle

\tableofcontents

\section{Introduction}\label{sec20200314a}

The notion of connections originates from Riemannian geometry and, as far as we know, its history goes back to the nineteenth century in the work of Christoffel~\cite{christoffel}. This notion was studied since then by several mathematicians and physicists including Levi-Civita and Ricci, Cartan, Ehresmann, and Koszul~\cite{levi, cartan1, cartan2, ehresmann2, ehresmann1, koszul}, in the order of appearance, to name a few. It was also generalized by Connes~\cite{AC} to the context of noncommutative differential geometry and studied further by Cuntz and Quillen in~\cite{CQ}; see also the works of Eriksen and Gustavsen~\cite{E, EG}.\vspace{2mm}

On the other hand, the notion of naïve liftability of differential graded (DG) modules was introduced and studied intensively by the authors in~\cite{NOY-jadid}-\cite{NOY} as a refined version of the classical notions of lifting and weak lifting~\cite{auslander:lawlom, nasseh:lql, nassehyoshino, OY, yoshino}. The main purpose of inventing naïve liftability is to study some fundamental problems in homological commutative algebra regarding the vanishing of Ext modules; see~\cite[Appendix A]{NOY-jadid} or~\cite[Section 7]{NOY1} for a comprehensive discussion on this topic.\vspace{2mm}

In this paper, we further generalize the notion of Connes' connections to the DG homological algebra setting (compare to~\cite{NOY3}); this is the subject of Section~\ref{sec20250421a}, which along with its preceding sections, lays out the groundwork for stating and proving our main result of this paper, namely, Theorem~\ref{naive}. In this theorem, over a DG $R$-algebra homomorphism $A \to B$, where $R$ is a commutative ring and $B$ is projective as an underlying graded $A$-module, we give necessary and sufficient conditions for a semifree DG $B$-module to be na\"{i}vely liftable to $A$ in terms of connections along DG modules over the enveloping DG algebra $B^e$. Finally, in Section~\ref{sec20250421d}, we prove Theorem~\ref{fesOX} which includes other variants of some of the conditions discussed in Theorem~\ref{naive} by considering connections along DG $B$-modules (as opposed to DG $B^e$-modules); this theorem contains a condition that includes the Kodaira-Spencer homomorphism and might be of independent interest.

\section{Basic set up and completeness of topological Hom sets}\label{sec20250407a}

The purpose of this section is to specify some of the terminology from differential graded (DG, for short) homological algebra used throughout the paper along with recording some of the topological properties of the graded Hom set. General references on DG homological algebra include~\cite{avramov:ifr,avramov:dgha, felix:rht, GL}. Also, for the unspecified notations and details, the reader may consult either of the papers~\cite{NOY-jadid}, \cite{NOY3}, or~\cite{NOY1}.

\begin{para}
Throughout the paper, $R$ is a commutative ring and $(A,d^A)$, or simply $A$, is a \emph{DG $R$-algebra}. More precisely, $A = \bigoplus  _{n \geq 0} A _n$ is a non-negatively graded $R$-algebra which forms a chain complex with a differential $d^A$ (i.e., $d^A$ is a collection $\{d_n^A\colon A_n\to A_{n-1}\}_{n\in \mathbb{N}}$ of $R$-linear maps with $d^A_{n-1}\circ d^A_n=0)$ such that for all homogeneous elements $a \in A_{|a|}$ and $b \in A_{|b|}$ the following conditions hold:
\begin{enumerate}[\rm(i)]
\item
$ab = (-1)^{|a| |b|}ba$ and $a^2 =0$  if $|a|$ is odd;
\item
$d^A$ satisfies the \emph{Leibniz rule}, that is, $d^A(ab) = d^A(a) b + (-1)^{|a|}ad^A(b)$.
\end{enumerate}
Here, for the homogeneous element $a$, the notation $|a|$ stands for the degree of $a$.
\end{para}

\begin{para}\label{para20221103a}
In this paper, every DG module is assumed to be a \emph{right} DG module, unless otherwise is specified. More precisely, a \emph{DG $A$-module} $(M, \partial^M)$, or simply $M$, is a graded right $A$-module $M=\bigoplus_{n\in \mathbb{Z}}M_n$ which forms a chain complex with a differential $\partial^M$ that satisfies the \emph{Leibniz rule}, i.e., for all homogeneous elements $a\in A$ and $m\in M$ we have
$\partial^M(ma) = \partial^M(m)a + (-1)^{|m|} m d^A(a)$. The notion of left DG $A$-modules is defined similarly. Moreover, since by definition $A$ is a ``strongly commutative'' DG algebra, every right DG $A$-module $M$ has a structure of a left DG $A$-module by setting $am=(-1)^{|a||m|}ma$ for all homogeneous elements $a\in A$ and $m\in M$. We call $M=\bigoplus_{n\in \mathbb{Z}}M_{n}$ the \emph{underlying graded $A$-module}.

We denote by $\C(A)$ the category of DG $A$-modules and DG $A$-module homomorphisms. Isomorphisms in $\C(A)$ are denoted by $\cong$ and quasi-isomorphisms in $\C(A)$ are
identified by $\simeq$. Also, the homotopy category of $A$ is denoted by $\K(A)$.

%A DG $A$-module $M$ is \emph{bounded below} if $M_i=0$ for all $i\ll 0$.

Let $n$ be an integer. The \emph{$n$-th shift} of a DG $A$-module $M$, denoted $M(n)$, is defined to be the DG $A$-module $\left(M(n), \partial^{M(n)}\right)$, where for all integers $i$ we have $\left(M(n)\right)_i = M_{i+n}$ and $\partial_i^{M(n)}=(-1)^n\partial_{i+n}^M$. %We simply write $\shift M$ instead of $\shift^1 M$.
%Finally, recall that a free DG $A$-module $M$ is a graded free $A$-module with a \color{red} homogeneous?? \color{black} free basis $\{e_{\lambda}\mid \lambda\in \Lambda\}$ such that $\partial ^M (e_{\lambda})= 0$ for all $\lambda\in \Lambda$.
\end{para}

\begin{para}\label{para20250712a}
Note that $A(-n)$ is free as an underlying graded $A$-module. Thus, we have $A(-n)\cong eA$ in $\mathcal{C}(A)$, where $e$ is a free basis element of degree $n$ and $d^A(e)=0$.
%Sometimes $A(-n)$ is called a principal (?) free dg $A$-module with generator of degree $n$.
In general, a DG $A$-module $M$ is called \emph{free} if $M \cong \bigoplus _{\lambda\in \Lambda} e_{\lambda}A$, where $\{e_{\lambda}\mid \lambda\in \Lambda\}$ is a homogeneous free basis of $M$ as an underlying graded $A$-module and
$\partial^A(e_{\lambda})=0$ for all $\lambda\in \Lambda$. We refer to such a basis as a \emph{free basis} of the free DG $A$-module $M$.
\end{para}

\begin{para}\label{para20250407a}
We denote by $\gr(A)$ the abelian category of graded (right) $A$-modules and graded (i.e., degree-preserving) $A$-module homomorphisms.
For DG $A$-modules $M$ and $N$, we set 
$$
\SHom _A (M, N) = \bigoplus _{n\in \mathbb{Z}} \Hom _{\gr (A)} (M, N(n))
$$
where we refer to $\Hom _{\gr (A)} (M, N(n))$ as the set of graded $A$-module homomorphisms from $M$ to $N$ of degree $n$.
Note that $\SHom _A (M, N)$ is a DG $A$-module with the left $A$-module structure defined by $(af)(m)=af(m)=(-1)^{|a|(|f|+|m|)}f(m)a$, the right $A$-module structure given by $(fa)(m)=f(am)=(-1)^{|a||m|}f(m)a$, and with
%via
%the right $A$-action defined by the equality 
%$$
%(f \cdot a)(m) = f(ma)=f(m)a  
%$$
%and 
the differential defined by the formula
\begin{equation}\label{eq20250408b}
\partial ^{\Hom} (f) =\partial ^{N}  \circ f  - (-1)^{n} f \circ \partial ^M
\end{equation}
for all homogeneous elements $a\in A$, $m \in M$, and $f  \in \Hom _{\gr (A)} (M, N(n))$. %For simplicity, we denote $\Hom _{\gr (A)} (M, N(0))$ by $\Hom _{\gr (A)} (M, N)$.
\end{para}

\begin{para}\label{topology}
For DG $A$-modules $M$ and $N$ and a positive integer $n$, let
$$
U_n = \{ f \in \SHom _A (M, N)\mid f (M_i) =0  \ \text{for all} \ i \leq n \}.
$$ 
Then, the set $\{U_n\mid n \in \mathbb{N}\}$ forms a fundamental system of open neighborhoods for $0$ in $\SHom _A (M, N)$, which defines the linear topology on $\SHom _A (M, N)$. 
Note that the addition function $(f, f') \mapsto f+f'$ and the action function $f \mapsto f \cdot a$, for $a \in A$, on $\SHom _A (M, N)$ are continuous in this topology. 
It is also trivial that the differential $\partial ^{\Hom}$ is continuous. Hence, $\SHom _A (M, N)$ is a topological DG $A$-module. 
Also, since $\bigcap _{n \in \mathbb{N}} U_n ={0}$, any set consisting of a single element in $\SHom _A (M, N)$ is a closed subset in this topology. 
Therefore, the cycle  $\ker (\partial ^{\Hom})$  is a closed subset. 
\end{para}

In the next discussion, for the definition of semifree DG $A$-modules and more, see~\cite[A.2]{AINSW} or equivalently~\cite[2.12]{NOY1}.

\begin{para}\label{para20250407b}
Let $M$ be a semifree DG $A$-module with a semifree filtration $$0=F_0\subseteq F_1\subseteq F_2 \subseteq \cdots \subseteq M.$$ 
By definition, for each positive integer $n$, the quotient $F_n / F_{n-1}$ is a free DG $A$-module and we have $M = \bigcup _{n\in \N} F_n$. 
We additionally assume that the minimal degrees of free generators of each quotient $F_n / F_{n-1}$ increase as a function of $n$. 
In such a case, for positive integers $n$, we define the subsets
$$
V_n = \{ f \in \SHom _A (M, N)\mid f (F_n) =0\}
$$
of $\SHom _A (M, N)$. Then, $\{ V_n\mid n\in \N\}$ is also a fundamental system of open neighborhoods of $0$ in $\SHom _A (M, N)$ which defines the same topology as in~\ref{topology}. 
\end{para}

\begin{prop}\label{complete}
For DG $A$-modules $M$ and $N$, the topological DG $A$-module $\SHom _A (M, N)$ is complete and separated (that is, every Cauchy sequence converges to a single point).  
\end{prop}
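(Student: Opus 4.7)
The plan is to dispose of separation immediately and then establish completeness by constructing the limit of a Cauchy sequence pointwise on homogeneous elements.

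Separation is essentially free: since $\SHom_A(M,N)$ is a topological abelian group equipped with the countable neighborhood base $\{U_n\}_{n\in \N}$ at $0$ satisfying $\bigcap_n U_n = \{0\}$ (already noted in~\ref{topology}), the topology is Hausdorff, so any Cauchy sequence has at most one limit.

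For completeness, let $\{f_k\}_{k\in \N}$ be a Cauchy sequence. Unpacking the Cauchy condition, for each $i\in \N$ there is an index $K_i$ such that $f_k - f_{k'}\in U_i$ whenever $k, k'\geq K_i$; equivalently, for every homogeneous $m\in M_j$ with $j\leq i$ the values $f_k(m)\in N$ stabilize once $k\geq K_i$. Defining $f(m)$ to be this eventual value on each homogeneous $m$ and extending additively yields a candidate map $f\colon M\to N$. The steps I would then carry out are: (i) verify $A$-linearity of $f$, which is inherited from each $f_k$ together with continuity of the right $A$-action on $\SHom_A(M,N)$; (ii) confirm that $f\in \SHom_A(M,N)$ by decomposing each $f(m)$ into its homogeneous components in $N=\bigoplus_j N_j$ and regrouping them by degree shift from $|m|$, so as to exhibit $f$ as an element of $\bigoplus_{n\in \bbz} \Hom_{\gr(A)}(M, N(n))$; (iii) establish convergence $f_k\to f$ from the observation that $(f_k - f)|_{M_j}=0$ for all $j\leq i$ whenever $k\geq K_i$, i.e.\ $f_k - f\in U_i$.

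The only step that requires genuine care is (ii)---confirming that the pointwise-constructed $f$ actually lies in the prescribed graded Hom module, and not merely in the ambient module of all $A$-linear maps from $M$ to $N$; steps (i) and (iii) reduce to routine calculations given the explicit definitions of $U_n$ and $\partial^{\Hom}$ together with the continuity properties recorded in~\ref{topology}.
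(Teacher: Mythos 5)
Your proposal follows essentially the same route as the paper's proof: the Cauchy condition forces $f_k(m)$ to stabilize for large $k$ once the degree of the homogeneous element $m$ is fixed, one defines $f$ as the eventual value, and convergence follows from $f_k-f\in U_i$; separation is exactly the observation $\bigcap_{n}U_n=\{0\}$ already recorded in~\ref{topology}. The one substantive remark concerns your step (ii), which you rightly single out as the delicate point: membership of the pointwise limit in $\SHom_A(M,N)=\bigoplus_{n\in\bbz}\Hom_{\gr(A)}(M,N(n))$ requires that $f$ have only \emph{finitely many} nonzero homogeneous components, and ``decomposing and regrouping by degree shift'' does not by itself deliver this --- a priori the limit only lands in the corresponding product. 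Indeed, a sequence of partial sums $f_k=\sum_{n=0}^{k}g_n$ with $g_n$ homogeneous of degree $-n$ and supported on $M_n$ is Cauchy (its tails lie in $U_j$) yet its pointwise limit has a nonzero component in every degree. The paper's own proof is equally silent here (it simply declares that $\{f_n\}$ converges to the pointwise-defined $f$), so your write-up is no less complete than the original; but be aware that step (ii) is not merely routine bookkeeping, and closing it honestly requires either reading $\SHom$ as the completed (product) version or restricting attention to Cauchy sequences concentrated in boundedly many degrees, which is what happens in the paper's actual uses of completeness (e.g.\ the degree-zero power series of derivations in Example~\ref{ex20250408a} and Example~\ref{important}).
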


\begin{proof}
Let $\{f_n\}_{n\in \N}$ be a Cauchy sequence in $\SHom _A (M, N)$, i.e., for every integer $n \geq 1$ there is an integer $m \geq 1$ such that
$f_i - f_j  \in U_n$ for all $i, j \geq m$. Thus, for a homogeneous element $x \in M$ with $|x| =n$ we have $f_m (x)=f_{m+1}(x)= \cdots$ for large enough $m$. 
Hence, we can define the mapping  $f :M \to N$ as  $f (x)$ being this common value for each $x$. 
Then, $\{ f_n \}_{n\in \N}$ uniquely converges to $f$, as desired.  
\end{proof}

We conclude this section by introducing the following notation.

\begin{para}\label{para20250408a}
Throughout the paper,  $A \to B$ is a DG $R$-algebra homomorphism, where $B$ is projective as an underlying graded $A$-module, and the enveloping DG $R$-algebra $B \otimes _A B$ is denoted by $B^e$. Also, $(J,d^J)$, or simply $J$, stands for the diagonal ideal of $B^e$, which by definition is the kernel of the map $\pi_B\colon B^e\to B$ defined by $\pi_B(b_1\otimes_Ab_2)=b_1b_2$. For more details on $B^e$ and $J$ see~\cite{NOY-jadid}--\cite{NOY1}. Moreover, we set $\Omega:=\Omega_A(B)=J/J^2$ that is called a \emph{differential module of $B$ over $A$}. If $A \to B$ is a homomorphism of commutative rings, then $\Omega$ is the module of K\"{a}hler differentials.
\end{para}

\section{Derivations}\label{sec20250408a}

In this section, we introduce the notion of derivations in the DG context and record some of their properties for using in the subsequent sections. Here, we use the notation that we established in the previous section. 

\begin{para}\label{para20250408b}
Let  $X$ be a DG $B^e$-module and $n$ be an integer. An {\it $A$-derivation} of degree $n$ is an element $D\in \Hom _{\gr(A)} (B, X(n))$ that satisfies the Leibniz rule, i.e., for all homogeneous elements $b_1, b_2 \in B$, the equality
\begin{equation}\label{eq20250421a}
D(b_1b_2) = D(b_1)b_2 + (-1) ^{|D||b_1|} b_1D(b_2) 
\end{equation}
holds with $|D|=n$. Notice that  $D(a)=0$ for an $A$-derivation $D$ and all $a\in A$. 
For the DG $B^e$-module $X$, the set of all $A$-derivations of degree $n$ is denoted by $\Der _A(B, X)_n$ and we set 
$$
\SDer _A(B, X) = \bigoplus _{n\in \mathbb{Z}} \Der _A(B, X)_n.   
$$
Note that $\SDer_A(B, X)$ has a structure of graded left $B$-module. In fact, for $D \in \Der _A (B, X) _n$ and $b \in B_m$, the $A$-derivation  $bD$ of degree $n +m$  is defined by 
$(bD) (c)= bD(c)$ for all $c \in B$. Moreover, $\SDer _A(B, X)$ has a left DG $B$-module structure with the differential defined by the formula
$$
\partial^{\Der} (D) = \partial ^X \circ D - (-1)^{|D|} D \circ d^B\in \Der _A(B, X)_{|D|-1}
$$
for all $D\in \Der _A(B, X)_{|D|}$. One can check that 
$$
\partial^{\Der} (bD) = d^B (b) D + (-1)^{|b|} b (\partial^{\Der} (D)) 
$$
for all homogeneous elements $b \in B$, that is, $\SDer _A(B, X)$ satisfies the Leibniz rule. 
\end{para}

%\begin{para}
%In the setting of~\ref{para20250408b}, the reader must be cautious. Let $D_1, D_2 \in \SDer_A(B, X)$ be homogeneous elements of different degrees. If $|D_1| \equiv |D_2| \pmod 2$, then $D_1+D_2$ satisfies the Leibniz rule. However, the element $D_1 +D_2$ %may not satisfies the Leibniz rule in general. In other words, $D \in \SDer _A(B, X)$ does not mean that $D$ necessarily satisfies the Leibniz rule unless it is a homogeneous element. 
%\end{para}

\begin{para}
In the setting of~\ref{para20250408b}, if  $X=B$, then we simply denote $\SDer_A(B, B)$ by $\SDer_A(B)$. This is a graded Lie algebra with the bracket product defined by  
\begin{equation}\label{eq20250729a}
[D_1, D_2] = D_1\circ D_2 -(-1)^{|D_1||D_2|} D_2\circ D_1
\end{equation}
for homogeneous elements $D_1,  D_2 \in \SDer_A(B)$.

We will use the notation of bracket product in different contexts throughout the paper with the same definition as in~\eqref{eq20250729a} and without further notice.
\end{para}

\begin{para}
Consider the setting of~\ref{para20250408b}, and let $D_1, D_2 \in \SDer_A(B, X)$ be homogeneous derivations with $|D_1|\neq |D_2|$. As a caution to the reader, note that if  $|D_1| \equiv |D_2| \pmod 2$, then $D_1+D_2$ satisfies the Leibniz rule~\eqref{eq20250421a}. However, the nonhomogeneous element $D_1 +D_2$ may not satisfy the Leibniz rule in general. In other words, $D \in \SDer _A(B, X)$ does not mean that $D$ satisfies~\eqref{eq20250421a} unless it is homogeneous.
\end{para}

\begin{para}
Considering the setting of~\ref{para20250408b}, similar to~\ref{topology}, a topology on $\SDer_A(B, X)$ can be defined. More  precisely, for each positive integer $n$, let  
$$
W_n = \{ D \in \SDer_A(B, X)\mid D(B_i) = 0 \ \text{for all}  \ i \leq n \}. 
$$
Then, the set $\{W_n\mid n \in \mathbb{N}\}$ forms a fundamental system of open neighborhoods for $0$ in $\SDer_A(B, X)$, which defines the linear topology on $\SDer_A(B, X)$.
\end{para}

The proof of the following result is similar to that of Proposition~\ref{complete}. 

\begin{prop}\label{prop20250408a}
Let $X$ be a DG $B^e$-module. Then, the topological DG $B$-module $\SDer_A(B, X)$ is complete and separated. 
\end{prop}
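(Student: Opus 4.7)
The plan is to follow the template of the proof of Proposition~\ref{complete} and reduce the statement to showing that $\SDer_A(B,X)$ sits inside $\SHom_A(B,X)$ as a closed subset. I would begin by noting the identification $W_n=U_n\cap\SDer_A(B,X)$, so that the topology on $\SDer_A(B,X)$ is the subspace topology inherited from $\SHom_A(B,X)$. Given a Cauchy sequence $\{D_n\}_{n\in\N}$ in $\SDer_A(B,X)$, it is then also Cauchy in $\SHom_A(B,X)$, and by Proposition~\ref{complete} converges uniquely to a map $D\colon B\to X$ with $D(b)=D_m(b)$ for every homogeneous $b\in B$ and all sufficiently large $m$ depending on $|b|$.

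The substantive content is to verify that the limit $D$ itself satisfies the derivation condition~\eqref{eq20250421a}. Since the neighborhoods $W_n$ respect the grading, I would decompose $D_n=\sum_k D_n^{(k)}$ and $D=\sum_k D^{(k)}$ into homogeneous components; each subsequence $\{D_n^{(k)}\}_n$ is then Cauchy in $\Der_A(B,X)_k$ and converges pointwise to $D^{(k)}$. For homogeneous $b_1,b_2\in B$, I would choose $m$ large enough that $D_m^{(k)}$ agrees with $D^{(k)}$ on each of $b_1$, $b_2$, and $b_1b_2$ (this is possible because the Leibniz rule involves only finitely many elements, and the Cauchy condition stabilizes each of them for large $m$). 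Applying the Leibniz rule for $D_m^{(k)}$ and substituting then immediately yields the Leibniz rule for $D^{(k)}$, so $D^{(k)}\in\Der_A(B,X)_k$ for every $k$, and hence $D\in\SDer_A(B,X)$. Separatedness is automatic from $\bigcap_n W_n=\{0\}$, which guarantees that the limit is unique.

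The main point to handle carefully, rather than a deep obstacle, is the reduction to homogeneous components: since~\eqref{eq20250421a} is meaningful only for homogeneous derivations, the Leibniz rule must be verified degree by degree, not in bulk, as a nonhomogeneous element of $\SDer_A(B,X)$ need not itself satisfy any single Leibniz identity. Once this bookkeeping is sorted out, the argument essentially reproduces the proof of Proposition~\ref{complete} verbatim, with the additional pointwise check of the derivation property at each homogeneous degree.
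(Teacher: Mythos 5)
Your argument is correct and takes essentially the same route as the paper, whose entire proof is the remark that it is ``similar to that of Proposition~\ref{complete}'': a Cauchy sequence stabilizes pointwise on each homogeneous element of $B$, and the limit is defined by these common values. The only content you supply beyond that pointwise-stabilization template is the degree-by-degree verification that the limit still satisfies the Leibniz rule~\eqref{eq20250421a}, which is exactly the detail the paper elides and which you handle correctly (including the caveat that the rule is only meaningful componentwise).
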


\begin{ex}\label{ex20250408a}
Let  $B = A [X_i\mid i\in \N]$  be a polynomial extension of $A$ obtained by adjunction of infinitely countably many variables such that $0<|X_1| \leq  |X_2| \leq  \cdots$ and that $\{ i \in \N\mid  |X_i|=n\}$ is a finite set for all positive integers $n$; see~\cite{NOY1} for more details. 
Then, 
$$
D_1 = \sum _{i=1}^\infty X_i \frac{\partial}{\partial X_i}  
$$ 
is an $A$-derivation in $\SDer _A(B)$ of degree $0$, that is,  $D_1 \in \Der _A(B)_0$. By Proposition~\ref{prop20250408a}, this infinite sum defines a unique derivation. 

On the other hand, notice that the element
$$
D_2 = \sum _{i=1}^\infty  \frac{\partial}{\partial X_i}  
$$
does not belong to $\SDer_A(B)$. 
\end{ex}

\begin{ex} 
The degree-preserving map $\E\colon B \to B$ defined by $\E (b) = |b| b$, for all homogeneous elements $b \in B$, is an element in $\Der _A(B)_0$ and is called the {\it Euler derivation}. Note that $B _0 \subseteq \Ker \E$, and if $R$ contains $\Q$, then equality holds.
\end{ex}

\begin{para}\label{para20250410a}
The $A$-linear map $\delta\colon B \to J$ defined by $\delta (b) = b \otimes_A 1 - 1 \otimes_A b$, for all $b \in B$, is an element of $\Der _A (B, J)_0$ and is called the \emph{universal derivation}; see~\cite{NOY3}. For the universal derivation $\delta \in \SDer_A(B, J)$ it always holds that  $\partial ^\Der (\delta)=0$. 
Hence, $\delta$ is a cycle in $\SDer_A(B, J)$ of degree $0$.

Given a DG $B^e$-module $X$ and an integer $n$, there is a natural isomorphism 
\begin{equation}\label{eq20250408a}
\varpi_n\colon \Hom _{\gr (B^e)}(J, X(n))\xra{\cong} \Der _A (B, X)_n 
\end{equation}
of abelian groups under which $f\in \Hom _{\gr (B^e)}(J, X(n))$ is mapped to 
\begin{equation}\label{eq20250410c}
\varpi_n(f)= f \circ \delta.
\end{equation} 
%$$
%\xymatrix{ B \ar[r] ^D \ar[dr] _{\delta} & X \\ &J \ar[u]_f } 
%$$
Note that $\SHom _{B^e}(J, X)= \bigoplus _{n\in \mathbb{Z}} \Hom _{\gr (B^e)}(J, X(n))$  is a left DG $B$-module defining the left $B$-action by $(bf )(j) = b f(j)$, for all $b \in B$, $f \in \SHom _{B^e}(J, X)$, and $j \in J$, and 
the differential by the formula~\eqref{eq20250408b}.
%$$
%\partial ^{\Hom} (f) = \partial ^X \circ f  - (-1)^{|f|} f \circ d^{J}
%$$
%for all $b \in B$, $f \in \Hom _{\gr(B^e)}(J, X(|f|))$, and $j \in J$. 

The isomorphisms $\varpi_n$ from~\eqref{eq20250408a} for all integers $n$ induce an isomorphism  
\begin{equation}\label{eq20250410b}
\varpi\colon \SHom _{B^e}(J, X) \xra{\cong}  \SDer_A (B, X)  
\end{equation}
of left DG $B$-modules. Furthermore, as an isomorphism between topological spaces, $\varpi$ is continuous.  
\end{para}

Recalling the notation from~\ref{para20250408a} and noting to the fact that $B^e/J\cong B$ as DG $R$-algebras, we summarize the above discussion in the following result. 

\begin{prop}\label{derivation}
Given a DG $B^e$-module $X$, we have an isomorphism 
$$
\SDer _A (B, X)\cong \SHom _{B^e} (J, X)
$$
of topological left DG $B^e$-modules. Taking homology we obtain an isomorphism 
$$
\HH\left(\SDer_A(B, X) \right) \cong \SHom _{\K(B^e)} (J, X) 
$$
of graded $R$-modules in which $\K(B^e)$ is the homotopy category of DG $B^e$-modules. 

Moreover, if $XJ=0$ (i.e., $X$ is a DG $B$-module), then there is an isomorphism 
\begin{equation}\label{eq20250410e}
\SDer _A (B, X) \cong \SHom _{B} (\Omega, X)  
\end{equation}
of topological DG $B$-modules. 
\end{prop}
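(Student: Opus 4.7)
The plan is to assemble what is already set up in~\ref{para20250410a}. There, the map $\varpi\colon \SHom_{B^e}(J,X)\to \SDer_A(B,X)$, $f\mapsto f\circ\delta$, with $\delta$ the universal derivation, has already been shown to be a continuous isomorphism of left DG $B$-modules via the degreewise isomorphisms $\varpi_n$ of~\eqref{eq20250408a}, whose inverse extends an $A$-derivation $D$ to the unique $B^e$-linear map sending $b\otimes_A 1-1\otimes_A b$ to $D(b)$. To upgrade this to an iso of DG $B^e$-modules I would simply note that both sides also carry a right $B$-action induced from right multiplication on $X$, and that $\varpi$ intertwines these actions by an immediate unpacking of the definitions; together with the commuting left $B$-action this promotes $\varpi$ to the desired $B^e$-module isomorphism.

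The single nonformal point for the first isomorphism is compatibility with the differentials. Writing out $\varpi(\partial^{\Hom}(f))$ using~\eqref{eq20250408b} and $\partial^{\Der}(\varpi(f))$ using the formula in~\ref{para20250408b}, the equality reduces to the identity $\partial^J\!\circ\delta=\delta\circ d^B$, which is precisely the statement that $\delta$ is a cycle, as recorded in~\ref{para20250410a}. The continuity of $\varpi$ was already noted there.

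For the second assertion I would invoke the standard identification: for any DG $R$-algebra $C$ and DG $C$-modules $Y,Z$, the degree-$n$ part of $\HH\bigl(\SHom_C(Y,Z)\bigr)$ is $\Hom_{\K(C)}(Y,Z(n))$, because cycles of degree $n$ are precisely the degree-$n$ DG chain maps and boundaries are exactly the null-homotopic ones. Summing over $n$ and combining with the first isomorphism gives the claim. For the third assertion, when $XJ=0$, every $f\in\SHom_{B^e}(J,X)$ satisfies $f(j_1j_2)=j_1 f(j_2)\in JX=0$ for all $j_1,j_2\in J$, so $f$ factors uniquely through $\Omega=J/J^2$; moreover, since $J$ annihilates both $\Omega$ and $X$, $B^e$-linearity of the induced map reduces to $B$-linearity. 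The resulting identification $\SHom_{B^e}(J,X)\cong \SHom_B(\Omega,X)$ is plainly compatible with the DG structure and linear topology, yielding~\eqref{eq20250410e}.

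The only genuinely nontrivial step is the differential compatibility in the first part, which pivots on $\delta$ being a cycle in $\SDer_A(B,J)$; every other verification is bookkeeping with module structures, the topology from~\ref{topology}, and the factorization through $J/J^2$.
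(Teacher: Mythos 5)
Your proposal is correct and takes essentially the same route as the paper, which presents this proposition as a summary of the discussion in~\ref{para20250410a}: the isomorphism is $\varpi(f)=f\circ\delta$, its compatibility with the differentials reduces to $\delta$ being a cycle (i.e., $\partial^J\circ\delta=\delta\circ d^B$), the homology statement is the standard identification of cycles modulo boundaries in $\SHom$ with homotopy classes of chain maps, and the case $XJ=0$ follows by factoring through $\Omega=J/J^2$. Nothing is missing.
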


\begin{para}
In Proposition~\ref{derivation}, if we assume that $X =J$, then 
$$
\SDer _A (B, J) \cong \SEnd _{B^e} (J)   
$$
in which $\SEnd _{B^e} (J)= \SHom _{B^e} (J, J)$ is a DG $R$-algebra that is non-commutative in general. This shows that $\SDer _A (B, J)$ is a DG module over the DG $R$-algebra $\SEnd_{B^e}(J)$. Furthermore, $\SDer _A (B, J)$ is a free DG module over $\SEnd _{B^e}(J)$ generated by the single element $\delta$. 
\end{para}

\begin{ex}\label{important}
Assume that either $B=A[X_i\mid i\in \N]$ is a polynomial extension of $A$, or $A$ is a divided power DG $R$-algebra and $B=A \langle X_i\mid i\in \N \rangle$ is a free extension of $A$; see~\cite{NOY1} for more details regarding the latter. In both cases we assume that the variables $X_i$ satisfy the same conditions as in Example~\ref{ex20250408a}, i.e., $0<|X_1| \leq  |X_2| \leq  \cdots$ and $\{ i \in \N\mid  |X_i|=n\}$ is a finite set for all integers $n\geq 1$. 

In~\cite[Theorem 6.10]{NOY1} we have shown that  $\Omega$ is a semifree DG $B$-module whose semifree basis is $\{u_i\mid i\in \N\}$, where each $u_i$ is the class of $\delta (X_i) \in J$ in $\Omega$. Then, there is a dual basis $\{\partial _i\mid i\in\N\}$ in $\SDer _A(B)=\SHom _{B} (\Omega, B)$, where each  $\partial _i\colon B \to B$ is an $A$-derivation such that
$$
\partial _i (X_j) =
\begin{cases}
1& i=j\\
0& i\neq j.     
\end{cases}
$$
Note that  $|\partial _i| = - |u_i| = -|X_i|$ are all negative and decreasing. Also, every element of $\SDer _A(B)$ is uniquely described as a power series of $\{\partial _i\mid i\in \N\}$. More precisely, for each element $D \in  \SDer _A(B)$ we have an equality
$$
D = \sum _{i=1}^{\infty} D(X_i) \partial _i  
$$
in which the power series converges because $\SDer _A(B)$ is complete. 
\end{ex}

\section{Connections}\label{sec20250421a}

The notion of connections was defined by Connes~\cite{AC} in the context of noncommutative differential geometry; see also~\cite{CQ}. A brief generalization of this notion in the DG setting is discussed in~\cite{NOY3}. Moreover, some of our discussions here on this topic can be considered as an upgraded version of the content in~\cite{E, EG} to the DG setting. Our main result in this section is Theorem~\ref{fundamental} in which we introduce the fundamental exact sequence of connections on a semifree DG $B$-module along a DG $B^e$-module. This result plays an essential role in our study of naïve liftability later in this paper. Again, our notation in this section comes from the previous sections.

\begin{para}\label{para20250408c}
Let $N$ be a DG $B$-module, $X$ be a DG $B^e$-module, and $D \in \SDer_A (B, X)$ be a homogeneous element. A \emph{$D$-connection on $N$} (\emph{along $X$}, to be precise) is an element $\psi\in \Hom_{\gr(A)}(N, N\otimes _B X(|D|))$ such that for all $b \in B$ and all homogeneous elements $x \in N$ the equality 
\begin{equation}\label{connection rule}
\psi (xb) = \psi (x)b + (-1)^{|D||x|}x \otimes _B  D(b)
\end{equation}
holds. Note that $|\psi|=|D|$ as an $A$-linear mapping. 
\end{para}

\begin{ex}
For a DG $B$-module $N$, the differential $\partial ^N$ is a $d^B$-connection on $N$ along $B$. 
%(In this example  $X = B$.)
\end{ex}

\begin{para}\label{0-connection}
Considering the notation from~\ref{para20250408c}, the mapping $0$ belongs to $\Der_A(B, X)_n$ for all integers $n$. Thus, by definition, an element $\psi\in \Hom_{\gr(A)}(N, N\otimes _B X(|D|))$ is a $0$-connection along $X$ if and only if $\psi$ is $B$-linear.
\end{para}

%\color{red}In the next definition, it seems that $(-1)^{|e_{\lambda_i}|}$ is needed in the display otherwise, the proof of~\ref{free case} seems to not work. Would you please check.\color{black}

\begin{para}\label{trivial connection}
Consider the notation from~\ref{para20250408c} and assume that $N$ is free as an underlying graded $B$-module with a free basis $\mathcal{B}=\{ e_{\lambda}\mid \lambda\in\Lambda\}$. 
For any derivation $D \in \Der (B, X)_{|D|}$ we define a mapping  $\varphi (D)\colon  N \to  N \otimes _B X$  by 
$$
\varphi (D) \left( \sum _{i =1} ^ r e_{\lambda_i} b_{\lambda_i} \right) = \sum _{i =1} ^ r (-1)^{|D||e_{\lambda_i}|} e_{\lambda_i} \otimes_B D(b_{\lambda_i})
$$
%$$
%\varphi (D) \left( \sum _{i =1} ^ r e_{\lambda_i} b_{\lambda_i} \right) = \sum _{i =1} ^ r \color{red}(-1)^{|e_{\lambda_i}|}\color{black}e_{\lambda_i} \otimes_B D(b_{\lambda_i})
%$$
for all sequences of elements $b_{\lambda_1}, \ldots , b_{\lambda_r} \in B$. It is straightforward to check that $\varphi (D)$ is a $D$-connection. 
We call $\varphi(D)$ the \emph{trivial $D$-connection on $N$} (\emph{along $X$}) with respect to the free basis $\mathcal{B}$.
\end{para}

Our next result lists a few properties of connections. The proofs follow directly from the definition.

\begin{prop}\label{sum of connections}
Let $N$ be a DG $B$-module and $X$ be a DG $B^e$-module. For $i=1,2$ assume that  $D_i \in \SDer _A(B, X)$  are homogeneous  and   $\psi _i\colon N \to N \otimes _B X(|D_i|)$ are $D_i$-connections on $N$  along $X$. 
Then, the following assertions hold:
\begin{enumerate}[\rm(a)]
\item\label{item20250408a}
If  $b \in B$ is homogeneous, then  $b \psi_1$ is a $bD_1$-connection. 
\item\label{item20250408b}
If  $|D_1|=|D_2|$, then  $\psi _1 \pm  \psi _2$ is a $D_1 \pm D_2$-connection. 
\item\label{item20250408c}
If $X=B$, then $[\psi _1, \psi _2]$ is a $[D_1, D_2]$-connection.   
\end{enumerate}
\end{prop}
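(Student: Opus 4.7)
The plan is to verify each of the three assertions by a direct application of the connection rule~\eqref{connection rule} to the proposed map, using the $B^e$-bimodule structure on $X$ and the fact that the left and right $B$-actions on $N\otimes_B X$ commute. In each part the candidate map manifestly lies in the appropriate graded $\Hom$-set with the correct degree, so only the Leibniz-type identity will require checking.

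First, for (a), I will set $\psi:=b\psi_1$, so $|\psi|=|b|+|D_1|$. Writing $\psi(xc)=b\cdot\psi_1(xc)$ and expanding $\psi_1(xc)$ by the $D_1$-connection rule should produce
\[
\psi(xc) = b\cdot\psi_1(x)\cdot c + (-1)^{|D_1||x|}\,b\cdot\bigl(x\otimes_B D_1(c)\bigr).
\]
The first term becomes $\psi(x)c$ because left and right $B$-actions on $N\otimes_B X$ commute; for the second term the Koszul sign for moving $b$ past $x$ inside the tensor contributes a factor $(-1)^{|b||x|}$, so the combined sign is $(-1)^{(|b|+|D_1|)|x|}=(-1)^{|bD_1||x|}$, matching the $bD_1$-connection rule.

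Part (b) is essentially formal: when $|D_1|=|D_2|$, applying the connection rule to each of $\psi_1(xc)$ and $\psi_2(xc)$ and adding (or subtracting) gives the connection rule for $\psi_1\pm\psi_2$ with derivation $D_1\pm D_2$, since the common sign $(-1)^{|D_1||x|}=(-1)^{|D_2||x|}$ factors out of the two derivation terms.

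The main work will be in (c), where $X=B$ so $N\otimes_B B\cong N$ and the $\psi_i$ can be composed as $A$-linear endomorphisms of degrees $|D_i|$. Applying the $D_2$-connection rule to $\psi_2(xc)$ and then the $D_1$-connection rule to each resulting summand, $\psi_1\psi_2(xc)$ expands into four terms, and similarly for $\psi_2\psi_1(xc)$. Forming $[\psi_1,\psi_2](xc)=\psi_1\psi_2(xc)-(-1)^{|D_1||D_2|}\psi_2\psi_1(xc)$, I expect the cross-terms of the form $\psi_2(x)D_1(c)$ and of the form $\psi_1(x)D_2(c)$ to cancel against each other by direct sign comparison, leaving
\[
[\psi_1,\psi_2](x)\cdot c + (-1)^{(|D_1|+|D_2|)|x|}\,x\otimes_B\bigl(D_1D_2(c)-(-1)^{|D_1||D_2|}D_2D_1(c)\bigr),
\]
which is precisely the $[D_1,D_2]$-connection rule. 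The only genuine obstacle in the whole proposition is this sign bookkeeping in (c); the computation is routine but the Koszul signs must be tracked with care.
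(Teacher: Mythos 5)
Your proposal is correct and matches the paper's approach: the paper states only that ``the proofs follow directly from the definition,'' and your direct verification of the connection rule~\eqref{connection rule} for each candidate map --- including the cancellation of the two cross-terms $\psi_2(x)D_1(c)$ and $\psi_1(x)D_2(c)$ in part (c) --- is exactly the intended computation. The sign bookkeeping you outline checks out.
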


The following result follows immediately from Proposition~\ref{sum of connections}\eqref{item20250408b} and~\ref{0-connection}.

\begin{cor}\label{Blinear}
Let $N$ be a DG $B$-module and $X$ be a DG $B^e$-module. If both $\psi_1, \psi_2\colon N \to N \otimes _BX(n)$ are $D$-connections on $N$ along $X$ of degree $n\in \mathbb{Z}$, then $\psi_1 -\psi_2$ is $B$-linear. 
\end{cor}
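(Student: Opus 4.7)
The plan is to combine the two cited results directly. First, I would invoke Proposition~\ref{sum of connections}\eqref{item20250408b} with $D_1 = D_2 = D$: since $\psi_1$ and $\psi_2$ are both $D$-connections of the common degree $n = |D|$, their difference $\psi_1 - \psi_2$ is a $(D-D)$-connection, i.e., a $0$-connection on $N$ along $X$.

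Next, by the observation recorded in~\ref{0-connection}, an element of $\Hom_{\gr(A)}(N, N\otimes_B X(n))$ is a $0$-connection along $X$ if and only if it is $B$-linear. Therefore, $\psi_1 - \psi_2$ is $B$-linear, which is the desired conclusion.

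There is essentially no obstacle here; the result is a bookkeeping consequence of the defining relation~\eqref{connection rule}, since subtracting two $D$-connections cancels the Leibniz correction term $(-1)^{|D||x|} x \otimes_B D(b)$, leaving only the $B$-linear part $\psi_1(x)b - \psi_2(x)b$. The only care needed is to note that Proposition~\ref{sum of connections}\eqref{item20250408b} requires $|D_1| = |D_2|$, which is satisfied by hypothesis, so its application is legitimate.
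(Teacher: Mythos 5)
Your proposal is correct and follows exactly the route the paper indicates: Proposition~\ref{sum of connections}\eqref{item20250408b} applied with $D_1=D_2=D$ shows $\psi_1-\psi_2$ is a $0$-connection, and~\ref{0-connection} identifies $0$-connections with $B$-linear maps. This matches the paper's own (one-line) justification of the corollary.
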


\begin{para}
Let $N$ be a DG $B$-module and $X$ be a DG $B^e$-module. For each integer $n$, let 
$$
\Conn (N, N \otimes _B X)_n= 
\left\{\psi\colon N\to N \otimes _B X(n)
\left| \text{\begin{tabular}{c}
$\psi$ is a $D$-connection for some \\
$D \in \Der _A(B, X)_n$
\end{tabular}}\right.\!\!\!\right\}.
$$
Moreover, we define 
$$
\SConn (N, N \otimes _B X)=  \bigoplus _{n \in \mathbb{Z}}\Conn (N, N \otimes _B X)_n. 
$$
By Proposition~\ref{sum of connections} we see that $\SConn (N, N \otimes _B X)$ has a structure of graded left $B$-module. 
Furthermore, $\SConn (N, N \otimes _B X)$ is a left DG $B$-module with the differential structure defined by the formula
$$
\partial^{\Conn}(\psi) = \partial ^{N \otimes_B X} \circ \psi - (-1) ^{|\psi |} \psi \circ \partial ^N 
$$
for all homogeneous elements $\psi \in \SConn (N, N \otimes _B X)$, where $\partial^{\Conn}(\psi)$ is the $\partial^{\Der}(D)$-connection for a $D$-connection $\psi$.

In case $X=B$, we simply denote $\SConn (N, N\otimes_BB)$ by $\SConn (N)$.
\end{para}

\begin{para}\label{zero}
Considering the notation from~\ref{para20250408c}, if $N ={0}$, then there is only one element $\psi$ in $\Hom_{\gr(A)}(N, N\otimes _B X(|D|))$, which is the zero mapping. 
Note that this $\psi$ satisfies~\eqref{connection rule} for any homogeneous derivation $D \in \SDer _A(B, X)$. 
Denote by $0_D$ the zero mapping that is regarded as a $D$-connection; we distinguish between $0_D$ and $0_{D'}$ if $D \not= D'$. 
Thus, we have the bijection
$$
\SConn (0, 0 \otimes  _B X) = \{ 0_D | \ D \in \SDer _A(B, X)\} \cong \SDer _A(B, X)
$$
\end{para}

\begin{para}
Let $N$ be a DG $B$-module and $X$ be a DG $B^e$-module. Similar to our discussion in~\ref{topology}, $\SConn (N, N \otimes _B X)$ becomes a topological DG $B$-module.
\end{para}

Again, the same argument as that of Proposition~\ref{complete} implies the following.

\begin{prop}
For a DG $B$-module $N$ and a DG $B^e$-module $X$, the topological DG $B$-module $\SConn (N, N \otimes _B X)$ is complete and separated.
\end{prop}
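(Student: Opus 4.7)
The plan is to mimic closely the proof of Proposition~\ref{complete}, with one additional verification step: that the pointwise limit of a Cauchy sequence of connections is again a connection (i.e., belongs to $\SConn(N, N\otimes_B X)$ rather than merely to $\SHom_A(N, N\otimes_B X)$).

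Start with a Cauchy sequence $\{\psi_k\}_{k\in\N}$ in $\SConn(N, N \otimes_B X)$. Since the sequence is Cauchy, all but finitely many $\psi_k$ have the same degree, so after discarding an initial segment I may assume $|\psi_k| = n$ is independent of $k$. For each $k$, let $D_k \in \SDer_A(B, X)_n$ be the (unique, if $N \ne 0$, or else chosen arbitrarily) derivation such that $\psi_k$ is a $D_k$-connection. For each homogeneous $x \in N$ of degree $i$, the Cauchy condition guarantees an index $K(i)$ with $\psi_k - \psi_\ell \in U_i$ for all $k, \ell \geq K(i)$, so $\psi_k(x) = \psi_\ell(x)$ for all such $k, \ell$. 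Define $\psi\colon N \to N\otimes_B X(n)$ pointwise on homogeneous elements by setting $\psi(x)$ to be this common value; extend $A$-linearly. As in Proposition~\ref{complete}, $\psi$ is a well-defined degree-$n$ graded $A$-linear map, and $\psi_k \to \psi$ in the topology on $\SHom_A(N, N\otimes_B X)$.

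The key additional step is to show $\psi \in \SConn(N, N\otimes_B X)$. For this I would argue that $\{D_k\}$ is a Cauchy sequence in $\SDer_A(B, X)$ as follows. For fixed $b \in B_j$ and any homogeneous $x \in N_i$, the connection rule~\eqref{connection rule} gives
\begin{equation*}
(-1)^{n|x|}\,x \otimes_B (D_k - D_\ell)(b) \;=\; (\psi_k - \psi_\ell)(xb) \;-\; (\psi_k - \psi_\ell)(x)\,b.
\end{equation*}
Choosing $i$ large enough that $\psi_k - \psi_\ell$ vanishes on both $N_i$ and $N_{i+j}$ once $k, \ell$ are large, the right-hand side is $0$; a standard argument (or the universal property of $\Omega$ together with the faithful $B$-module action of $N$ on a suitable summand when $N \ne 0$, or the trivial case~\ref{zero} when $N = 0$) forces $(D_k - D_\ell)(b)$ to lie in a prescribed neighborhood $W_m$ of $0$. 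Thus $\{D_k\}$ is Cauchy and, by Proposition~\ref{prop20250408a}, converges to a unique $D \in \SDer_A(B, X)_n$. Taking $k \to \infty$ in $\psi_k(xb) = \psi_k(x)b + (-1)^{n|x|} x \otimes_B D_k(b)$ then shows $\psi$ satisfies~\eqref{connection rule} with derivation $D$, so $\psi \in \Conn(N, N\otimes_B X)_n$.

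Finally, $\psi_k \to \psi$ in $\SConn(N, N\otimes_B X)$ (same neighborhoods as in the ambient $\SHom$), and separation follows from $\bigcap_{n\in\N} U_n = \{0\}$ in $\SConn$, already observed in~\ref{topology}. The main obstacle I anticipate is justifying cleanly that a Cauchy sequence of connections induces a Cauchy sequence of derivations; this is automatic in the case $X = B$ (via Proposition~\ref{derivation} and a dual basis argument as in Example~\ref{important}), and the general case $N = 0$ is trivial by~\ref{zero}, but the fully general passage from $\psi_k$-smallness to $D_k$-smallness must use the specific structure of $N$ as a DG $B$-module. Once this is in hand, the rest of the proof is a direct transcription of the argument of Proposition~\ref{complete}.
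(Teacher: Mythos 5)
Your overall route is the same as the paper's: the paper's entire proof is the remark that ``the same argument as that of Proposition~\ref{complete}'' applies, i.e.\ a Cauchy sequence stabilizes pointwise in each degree and the common values assemble into the limit. Where you go beyond the paper is in insisting that the pointwise limit $\psi$ be shown to lie in $\SConn(N,N\otimes_BX)$ rather than merely in the ambient graded Hom set; that is the right instinct, and it is precisely the point the paper elides.

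Two things in your execution of that extra step are shaky, though. First, the claim that ``all but finitely many $\psi_k$ have the same degree'' is not forced by the Cauchy condition: elements of $\SConn$ are finite sums of homogeneous components, and a Cauchy sequence can accumulate components in ever more negative degrees (each new component lying in a deeper $U_n$), so the limit need not be concentrated in a single degree --- this same issue is already latent in Proposition~\ref{complete} itself, so you are in the paper's company here, but the assertion as you state it is not justified. Second, and more substantively, your passage from $\psi_k$-smallness to $D_k$-smallness needs the map $\xi\mapsto x\otimes_B\xi$ from $X$ to $N\otimes_BX$ to be injective for some homogeneous $x$ of bounded degree. That holds when $N$ is free as an underlying graded $B$-module (take $x$ a basis element of minimal degree, as in~\ref{para20250409a}), which is the only case the paper ever uses; but the proposition is stated for an arbitrary DG $B$-module $N$, where no such $x$ need exist (and for $N=0$ the derivation attached to a connection is not determined at all, cf.~\ref{zero}), so ``a standard argument forces $(D_k-D_\ell)(b)$ to lie in $W_m$'' does not go through in the stated generality. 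You flag this yourself at the end, correctly; the honest conclusion is that your argument completes the proof for $N$ free (hence for the semifree modules appearing in Theorems~\ref{naive} and~\ref{fesOX}), while for general $N$ the verification that the limit is again a connection remains open --- in the paper as well as in your write-up.
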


\begin{para}\label{para20250409a}
For a DG $B$-module $N$ that is free as an underlying graded $B$-module with a basis $\mathcal{B}=\{e_{\lambda}\mid \lambda\in \Lambda\}$ and a DG $B^e$-module $X$, there is a natural inclusion  
$$
\iota\colon \SHom _B(N, N \otimes _BX) \to \SConn (N, N \otimes _BX)  
$$
since all $B$-linear mappings $N \to N \otimes _B X$ are $0$-connections. On the other hand, there is a natural mapping 
$$
\nu\colon \SConn (N, N \otimes _BX) \to \SDer _A(B, X)
$$
which maps a $D$-connection $\psi$ to $D$. Note that $\nu$ is well-defined, that is, if $\psi\in \SConn (N, N \otimes _BX)$ is a $D$-connection, then $D$ is uniquely determined by $\psi$. More precisely, if $\psi$ is also a $D'$-connection for $D'\in \SDer _A(B, X)$, then for all $b\in B$ and the basis elements $e_{\lambda}\in \mathcal{B}$, by~\eqref{connection rule} we have $e_{\lambda}\otimes_B (D-D')(b)=0$, which implies that $D=D'$. Note also that both $\iota$ and $\nu$ are continuous.
\end{para}

\begin{para}\label{para20250408d}
Let $N$ be a DG $B$-module, $X$ be a DG $B^e$-module, and $\mathcal{L} \subseteq \SDer _A(B, X)$ be a left DG $B$-submodule. 
An \emph{$\mathcal{L}$-connection} is a DG $B$-module homomorphism
$$
\nabla\colon \mathcal{L} \to \SConn (N, N \otimes_B X).
$$
For an element $D\in \mathcal{L}$, we denote $\nabla(D)$ by $\nabla _D$. Thus, $\nabla$ is an $\mathcal{L}$-connection if and only if for all $D, D' \in \mathcal{L}$ and $b\in B$ the following conditions are satisfied: 
\begin{enumerate}[\rm(i)]
\item $\nabla _D$ is a $D$-connection on $N$ along $X$;
\item $\nabla_{D+D'} =\nabla_D+ \nabla_{D'}$;
\item $\nabla _{bD} = b \nabla _D$;    
\item $\nabla _{\partial^\Der (D)} = \partial ^{\Conn} (\nabla _D)$. 
\end{enumerate}
\end{para}

\begin{para}
Consider the setting of~\ref{para20250408d}. Assume that $X=B$ and $\mathcal{L}$ is a Lie subalgebra of $\SDer_A(B)$. Let $\nabla$ be an $\mathcal{L}$-connection. For $D_1, D_2 \in \mathcal{L}$,  we define
$$
R_{\nabla} (D_1, D_2) =  [\nabla _{D_1}, \nabla _{D_2}]  - \nabla _{[D_1, D_2]}.
$$ 
We call $R_{\nabla}$ the {\it curvature of $\nabla$}. Also, the $\mathcal{L}$-connection $\nabla$ is said to be {\it integrable} if $R_{\nabla} =0$; see~\cite{EG} for details on these topics for modules. 
\end{para}

\begin{thm}\label{fundamental}
Let $N$ be a DG $B$-module and $X$ be a DG $B^e$-module. Assume that $N$ is free as an underlying graded $B$-module (not necessarily semifree). Then, 
\begin{equation*}\label{fes} 
\E _N(X)\colon 0 \to \SHom _B(N, N \otimes _BX) \xra{\iota}  \SConn (N, N \otimes _BX) \xra{\nu} \SDer _A(B, X) \to 0
\end{equation*}
is an exact sequence in the category $\C(B)^{op}$ of left DG $B$-modules.
\end{thm}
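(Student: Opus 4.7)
The plan is to verify short exact sequence exactness at each of the three spots, after first confirming that $\iota$ and $\nu$ are indeed morphisms of left DG $B$-modules. The category-theoretic content is light: all the necessary ingredients are already in place in the paper, namely~\ref{0-connection} (a $0$-connection is precisely a $B$-linear map), Proposition~\ref{sum of connections} (additive and $B$-scalar behavior of connections), and~\ref{trivial connection} (construction of a trivial $D$-connection when $N$ is free as an underlying graded $B$-module).

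First I would check that $\iota$ and $\nu$ are DG $B$-module homomorphisms. For $\iota$, this is essentially a tautology: by~\ref{0-connection}, a $B$-linear map of degree $n$ is the same data as a $0$-connection of degree $n$, and the differential on $\SConn(N,N\otimes_B X)$ restricts to the usual Hom-differential on $\SHom_B(N,N\otimes_B X)$ by the defining formula of $\partial^{\Conn}$. For $\nu$, additivity follows from Proposition~\ref{sum of connections}\eqref{item20250408b}, compatibility with the left $B$-action follows from Proposition~\ref{sum of connections}\eqref{item20250408a}, and compatibility with the differentials follows directly from the definition of $\partial^{\Conn}(\psi)$ as a $\partial^{\Der}(D)$-connection when $\psi$ is a $D$-connection. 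Well-definedness of $\nu$ (i.e., unique recovery of $D$ from $\psi$) uses freeness of $N$ over the underlying graded $B$, as already observed in~\ref{para20250409a}.

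Next, injectivity of $\iota$ is trivial since $\iota$ is set-theoretically an inclusion. For exactness at $\SConn(N,N\otimes_B X)$, I would argue in both directions using~\ref{0-connection}: if $\psi\in\ker\nu$ then each homogeneous component is a $0$-connection, hence $B$-linear, so $\psi\in\im\iota$; conversely, any $B$-linear map is a $0$-connection, so $\iota(\psi)\in\ker\nu$. For surjectivity of $\nu$, I would use the trivial connection construction from~\ref{trivial connection}: given a homogeneous $D\in\Der_A(B,X)_n$, choose a free basis $\mathcal{B}=\{e_\lambda\mid\lambda\in\Lambda\}$ of $N$ as an underlying graded $B$-module and define $\varphi(D)\in\Conn(N,N\otimes_B X)_n$ by the formula in~\ref{trivial connection}; then $\nu(\varphi(D))=D$. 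Extending additively over the direct sum decomposition $\SDer_A(B,X)=\bigoplus_n\Der_A(B,X)_n$ yields a set-theoretic section of $\nu$, proving surjectivity.

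The main obstacle, if any, is essentially bookkeeping: confirming that $\nu$ really is a morphism in the DG $B$-module category (the computations are direct from the definitions, but one must be careful with signs and with checking that every clause in~\ref{para20250408d} of the definition of DG $B$-module morphism is satisfied). Note also that the section $\varphi$ constructed above is \emph{not} claimed to be a DG $B$-module morphism — it is merely a set-theoretic section exhibiting surjectivity. This is consistent with the fact that the sequence $\E_N(X)$ may fail to split as a sequence of left DG $B$-modules; such splitting is precisely the notion of an $\SDer_A(B,X)$-connection on $N$ in the sense of~\ref{para20250408d} and will be the subject of the naïve liftability criterion to come.
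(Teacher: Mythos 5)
Your proposal is correct and follows essentially the same route as the paper: surjectivity of $\nu$ via the trivial $D$-connection $\varphi(D)$ from~\ref{trivial connection}, and identification of $\Ker\nu$ with the $B$-linear maps via~\ref{0-connection} and~\ref{para20250409a}. The extra verifications you include (that $\iota$ and $\nu$ are DG $B$-module morphisms, and the remark that $\varphi$ is only a set-theoretic section) are correct and consistent with what the paper leaves implicit.
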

  
\begin{proof}
The surjectivity of $\nu$ follows from~\ref{trivial connection} and the fact that  $\nu (\varphi (D)) =D$, for all $D\in \SDer _A(B, X)$. Also, by definition, $\Ker \nu$ is the set of all $0$-connections on $N$ along $X$, that is, $\Ker \nu=\SHom _B(N, N \otimes _BX)$; see~\ref{0-connection} and~\ref{para20250409a}. 
\end{proof}

\begin{para}
We call the short exact sequence $\E_N (X)$ introduced in Theorem~\ref{fundamental} the \emph{fundamental exact sequence of connections on $N$ along $X$}.

If $N=0$, then the fundamental exact sequence  $\E_0(X)$ implies an isomorphism 
$\SConn (0, 0 \otimes _BX)  \cong \SDer _A(B, X)$ that has been already remarked in~\ref{zero}. 
\end{para}

In the following result, recall the definition of free DG modules from~\ref{para20250712a}.

\begin{prop}\label{free case}
Let $N$ be a free DG $B$-module. Then, as a sequence of DG $B$-modules, $\E_N(X)$ is split for all DG $B^e$-modules $X$. If $X=B$, then $N$ admits an integrable $\SDer_A(B)$-connection. 
\end{prop}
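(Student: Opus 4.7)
The plan is to produce an explicit DG $B$-module section of $\nu$ from Theorem~\ref{fundamental} using the trivial connection construction in~\ref{trivial connection}. Fix a free basis $\mathcal{B}=\{e_\lambda\mid \lambda\in \Lambda\}$ of $N$, so that $\partial^N(e_\lambda)=0$ for every $\lambda$. For each homogeneous $D\in \SDer_A(B,X)$, let $\varphi(D)$ denote the trivial $D$-connection on $N$ along $X$ with respect to $\mathcal{B}$, and extend $\varphi$ componentwise to all of $\SDer_A(B,X)$. By~\ref{trivial connection} the image lands in $\SConn(N,N\otimes_B X)$, and $\nu(\varphi(D))=D$ holds by construction. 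It remains to show that $\varphi\colon \SDer_A(B,X)\to \SConn(N,N\otimes_B X)$ is a morphism in $\C(B)^{op}$; granting this, $\varphi$ is the required splitting.

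Additivity of $\varphi$ is visible from the defining formula in~\ref{trivial connection}. Left $B$-linearity, $\varphi(bD)=b\varphi(D)$ for homogeneous $b\in B$, is a matter of matching the Koszul sign produced by replacing $|D|$ with $|b|+|D|$ in the exponent $(-1)^{|D||e_\lambda|}$ against the sign picked up by the left $B$-action on $N\otimes_B X$. The main verification is compatibility with the differentials, $\varphi(\partial^\Der(D))=\partial^\Conn(\varphi(D))$; this is where the hypothesis $\partial^N(e_\lambda)=0$ enters crucially. Evaluating both sides on $e_\lambda b$ and unfolding the definitions in~\ref{trivial connection} and in~\ref{para20250408b}, the term $(-1)^{|D|}\varphi(D)\circ \partial^N$ contributes only through $d^B(b)$, and combining with $\partial^{N\otimes_B X}\circ \varphi(D)$ yields exactly the trivial connection attached to $\partial^X\circ D-(-1)^{|D|}D\circ d^B=\partial^\Der(D)$.

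For the second assertion, take $X=B$ and put $\nabla_D=\varphi(D)$ for $D\in \SDer_A(B)$. By the first part together with~\ref{para20250408d}, $\nabla$ is an $\SDer_A(B)$-connection. Integrability reduces to the identity $[\varphi(D_1),\varphi(D_2)]=\varphi([D_1,D_2])$ for homogeneous $D_1,D_2\in \SDer_A(B)$. Using $N\otimes_B B=N$, a direct computation on $e_\lambda b$ gives $\varphi(D_1)\circ \varphi(D_2)(e_\lambda b)=(-1)^{(|D_1|+|D_2|)|e_\lambda|}\,e_\lambda\,(D_1\circ D_2)(b)$; antisymmetrizing with the Koszul sign $(-1)^{|D_1||D_2|}$ converts $D_1\circ D_2-(-1)^{|D_1||D_2|}D_2\circ D_1$ into $[D_1,D_2]$ and produces $\varphi([D_1,D_2])(e_\lambda b)$. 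Hence $R_\nabla=0$.

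The only real obstacle is the careful tracking of Koszul signs throughout these three compatibility checks; once the identity $\partial^N(e_\lambda)=0$ is in hand, everything reduces to mechanical calculation, with no structural difficulty beyond what is already handled by the trivial connection formula.
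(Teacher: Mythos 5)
Your proposal is correct and follows essentially the same route as the paper: the splitting is the map $D\mapsto\varphi(D)$ sending a derivation to its trivial connection with respect to a fixed free basis, the key check is $\partial^{\Conn}\circ\varphi=\varphi\circ\partial^{\Der}$ via evaluation on $e_\lambda b$ using $\partial^N(e_\lambda)=0$, and integrability follows because the bracket of trivial connections agrees with the trivial $[D_1,D_2]$-connection. The only cosmetic difference is that you verify $[\varphi(D_1),\varphi(D_2)]=\varphi([D_1,D_2])$ by direct computation, whereas the paper invokes Proposition~\ref{sum of connections}\eqref{item20250408c} together with vanishing on basis elements; both amount to the same argument.
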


\begin{proof}
Let $\mathcal{B}=\{ e_{\lambda}\mid \lambda\in \Lambda\}$ be a free basis for the free DG $B$-module $N$. Consider $\varphi\colon \SDer _A(B, X) \to \SConn (N, N \otimes _BX)$ which maps a derivation $D$ to the trivial $D$-connection $\varphi (D)$, introduced in~\ref{trivial connection}, with respect to the free basis $\mathcal{B}$. It is straightforward to see that $\varphi$ is a graded $B$-module homomorphism and $\nu \circ \varphi=\id_{\SDer _A(B, X)}$. 
Therefore, to prove the assertion, it only remains to show the equality
\begin{equation}\label{eq20250409a}
\partial ^{\Conn} \circ \varphi = \varphi \circ \partial ^{\Der}.
\end{equation} 
For this, let $D \in \SDer _A(B, X)$ and $b \in B$. Then,  
\begin{eqnarray*} 
 (\partial ^{\Conn} \circ \varphi )(D)(e_{\lambda}b) 
 \!\!\!\!&=&\!\!\!\! (\partial ^{\Conn} (\varphi (D)))(e_{\lambda}b) \\
 &=&\!\!\!\!\partial ^{N \otimes X} ( \varphi (D)(e_{\lambda}b) ) - (-1)^{|D|+|e_{\lambda}|}  \varphi (D) (e_{\lambda} d^Bb) \\
 &=&\!\!\!\!(-1)^{|D||e_{\lambda}|}\!\!\left(\partial ^{N \otimes X} (e_{\lambda}\otimes_B D(b)) - (-1) ^{|D|+|e_{\lambda}|} e_{\lambda}\otimes_B D(d^Bb)\right) \\
 &=&\!\!\!\!(-1)^{|D||e_{\lambda}|+|e_{\lambda}|}\left(e_{\lambda}\otimes_B\partial ^X(D(b)) - (-1)^{|D|}  e_{\lambda}\otimes_B D(d^Bb)\right) \\
 &=&\!\!\!\!(-1)^{|D||e_{\lambda}|+|e_{\lambda}|} e_{\lambda}\otimes_B\left(\partial ^X(D(b)) - (-1)^{|D|}  D(d^Bb)\right)  \\
 &=&\!\!\!\!(-1)^{|D||e_{\lambda}|+|e_{\lambda}|} e_{\lambda}\otimes_B \partial ^{\Der} (D) (b)\\
 &=&\!\!\!\!\varphi \left(\partial ^{\Der} (D)\right)(e_{\lambda}b)  \\
 &=&\!\!\!\!(\varphi \circ \partial ^{\Der} )(D) (e_{\lambda}b) 
\end{eqnarray*} 
where the second equality follows from the fact that $\partial ^N (e_{\lambda} b)= (-1)^{|e_{\lambda}|} e_{\lambda} d^Bb$. This computation establishes the equality~\eqref{eq20250409a}.

Assume now that $X=B$, and set $\nabla _D = \varphi (D)$, for an element $D \in \SDer _A(B)$. 
If  $D_1, D_2 \in \SDer_A(B)$, then by Proposition \ref{sum of connections}\eqref{item20250408c} we have $[\nabla_{D_1}, \nabla_{D_2}]$ is a $[D_1, D_2]$-connection. Also, $[\nabla_{D_1}, \nabla_{D_2}](e_{\lambda})=0$ for all of the basis elements $e_{\lambda}\in \mathcal{B}$. Hence, $[\nabla_{D_1}, \nabla_{D_2}]$ is the trivial $[D_1, D_2]$-connection, i.e., $[\nabla_{D_1}, \nabla_{D_2}]=\nabla_{[D_1,D_2]}$. This means that the curvature $R_{\nabla}$ of $\nabla$ vanishes, and therefore, $\nabla$ is integrable.
\end{proof}

\section{Naïve lifting and connections along DG $B^e$-modules}\label{sec20250421b}

Our main result in this section (and also in this paper) is Theorem~\ref{naive} in which we characterize naïve liftability of DG modules in terms of connections along DG $B^e$-modules. We start by reminding the reader of the definition of naïve liftability from~\cite{NOY1, NOY}. As usual, we still use the notation from the previous sections.

\begin{para}\label{Atiyah} 
Let $N$ be a semifree DG $B$-module with a semifree basis $\mathcal{B}=\{e_{\lambda}\mid \lambda\in \Lambda\}$. We denote by $N|_A$ the DG $B$-module $N$ regarded as a DG $A$-module via $A\to B$. Note that $N|_A$ is a semifree DG $A$-module and there is a natural DG $B$-module epimorphism $\pi_N\colon N|_A\otimes_A B\to N$ defined by $\pi_N(n\otimes_Ab)=nb$, for all $n\in N$ and $b\in B$. The DG $B$-module $N$ is called \emph{naïvely liftable} to $A$ if $\pi_N$ has a right inverse in the abelian category $\mathcal{C}(B)$, or equivalently, if the natural exact sequence 
$$
0 \to N \otimes_B J \to N \otimes_A B \xra{\pi_N} N \to 0
$$
of DG $B$-modules splits.

Note that $N$ being a semifree DG $B$-module exactly means that it is free as an underlying graded $B$-module with the free basis $\mathcal{B}$ that is indexed by a well-ordered set $\Lambda$ and for each $e_{\lambda}\in \mathcal{B}$ we have 
\begin{equation}\label{eq20250410a}
\partial ^N (e_{\lambda} ) = \sum _{\mu < \lambda} e_{\mu} b_{\mu \lambda}  
\end{equation}
where $b_{\mu \lambda} \in B$. In this setting, the $B$-linear map $\alpha\colon N \to N \otimes _B J (-1)$ defined by 
$$
\alpha (e_{\lambda} ) = \sum _{\mu < \lambda} e_{\mu } \otimes_B  \delta (b_{\mu \lambda})
$$
where $\delta$ is the universal derivation defined in~\ref{para20250410a}, is called the \emph{Atiyah homomorphism}; see~\cite{NOY3} for more details. The map $\alpha$ is, in fact, a DG $B$-module homomorphism of degree $-1$ that fits into an exact triangle 
$$
N\otimes_BJ \to N|_A \otimes _AB \xra{\pi_N} N \xra{\alpha} N \otimes _B J (-1)
$$
in the homotopy category $\K (B)$. It is shown in~\cite[Proposition 4.10]{NOY3} that $N$ is naïvely liftable to $A$ if and only if $\alpha$ is null homotopic. 
\end{para}

Another description of the Atiyah homomorphism using connections is as follows; compare to~\cite[Theorem 4.9]{NOY3}. 

\begin{prop}\label{atiyah}
For a semifree DG $B$-module $N$ we have the equality
$$
\alpha = - \partial ^{\Conn} (\varphi (\delta)).
$$
\end{prop}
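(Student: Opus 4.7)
The plan is to verify the claimed equality by a direct computation on the semifree basis, after first reducing the check to basis elements via a $B$-linearity observation.

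First I would note that $\delta \in \SDer_A(B, J)$ has degree $0$, so by the definition in~\ref{trivial connection} the trivial $\delta$-connection $\varphi(\delta)$ on $N$ along $J$ has degree $0$, and therefore $\partial^{\Conn}(\varphi(\delta))$ has degree $-1$, matching the degree of $\alpha$. Next I would check that $\partial^{\Der}(\delta) = 0$: since the differential on $B^e = B \otimes_A B$ acts by the Leibniz rule, one has $\partial^J(\delta(b)) = d^B(b) \otimes_A 1 - 1 \otimes_A d^B(b) = \delta(d^B(b))$, which gives $\partial^{\Der}(\delta) = \partial^J \circ \delta - \delta \circ d^B = 0$. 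Consequently, $\partial^{\Conn}(\varphi(\delta))$ is a $0$-connection, i.e.\ a $B$-linear map of degree $-1$ (cf.~\ref{0-connection}). Since $\alpha$ is also $B$-linear of degree $-1$, it suffices to check that $\partial^{\Conn}(\varphi(\delta))(e_\lambda) = -\alpha(e_\lambda)$ for every basis element $e_\lambda \in \mathcal{B}$.

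The key calculation is then the following. By the formula in~\ref{trivial connection} applied with $b=1$, we have $\varphi(\delta)(e_\lambda) = e_\lambda \otimes_B \delta(1) = 0$, because $\delta(1) = 1 \otimes_A 1 - 1 \otimes_A 1 = 0$. Therefore
\begin{equation*}
\partial^{\Conn}(\varphi(\delta))(e_\lambda) = \partial^{N \otimes_B J}(\varphi(\delta)(e_\lambda)) - \varphi(\delta)(\partial^N(e_\lambda)) = -\varphi(\delta)(\partial^N(e_\lambda)).
\end{equation*}
Using the semifree expansion~\eqref{eq20250410a} of $\partial^N(e_\lambda) = \sum_{\mu < \lambda} e_\mu b_{\mu\lambda}$ and the definition of $\varphi(\delta)$, the right-hand side equals $-\sum_{\mu<\lambda} e_\mu \otimes_B \delta(b_{\mu\lambda}) = -\alpha(e_\lambda)$, exactly by the definition of the Atiyah homomorphism in~\ref{Atiyah}.

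There is no real obstacle here; the statement is essentially a reformulation of the Atiyah homomorphism in the language of connections, and the argument reduces to chasing definitions once one observes that both $\alpha$ and $\partial^{\Conn}(\varphi(\delta))$ are $B$-linear (the latter by virtue of $\partial^{\Der}(\delta) = 0$). The only minor subtlety to keep track of is the sign conventions, in particular verifying that all the signs $(-1)^{|\delta||e_\lambda|}$ arising in~\ref{trivial connection} simplify to $+1$ since $|\delta|=0$, so that the final sign in $\alpha = -\partial^{\Conn}(\varphi(\delta))$ comes entirely from the explicit $-(-1)^{|\varphi(\delta)|} = -1$ appearing in the definition of $\partial^{\Conn}$.
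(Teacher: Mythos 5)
Your proposal is correct and follows essentially the same route as the paper's own proof: observe that $\partial^{\Conn}(\varphi(\delta))$ is a $\partial^{\Der}(\delta)$-connection, hence a $0$-connection and therefore $B$-linear (the paper cites $\partial^{\Der}(\delta)=0$ from~\ref{para20250410a} rather than recomputing it), and then compare with $\alpha$ on the semifree basis using $\varphi(\delta)(e_\lambda)=0$ and the expansion~\eqref{eq20250410a}. The extra details you supply (the degree check, the explicit verification of $\partial^{\Der}(\delta)=0$, and the reduction to basis elements) are all consistent with the paper's conventions.
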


\begin{proof}
Let $\mathcal{B}=\{e_{\lambda}\mid \lambda\in \Lambda\}$ be a semifree basis for the DG $B$-module $N$. Note that $\partial^{\Conn}(\varphi(\delta))$ is a $\partial^{\Der}(\delta)$-connection, that is, $\partial^{\Conn}(\varphi(\delta))$ is $B$-linear. Then, by definition, for each $e_{\lambda}\in \mathcal{B}$ we have the equalities
\begin{eqnarray*}
-\partial^{\Conn}(\varphi (\delta))(e_{\lambda}) &= &
\varphi(\delta)(\partial^N(e_{\lambda}))-\partial^{N\otimes_B J} (\varphi(\delta)(e_{\lambda})) \\
&=& \sum_{\mu < \lambda} e_{\mu} \otimes_B \delta(b_{\mu \lambda}) \\ 
&=&\alpha(e_{\lambda})
\end{eqnarray*}
in which the second equality follows from~\eqref{eq20250410a} and the fact that $\varphi(\delta)(e_{\lambda})=0$.
\end{proof}

\begin{thm}\label{naive}
For a semifree DG $B$-module $N$ the following are equivalent:
\begin{enumerate}[\rm(i)]
\item  $N$ is naïvely liftable to $A$; 
\item There exists a $\delta$-connection  $\psi\in \SConn (N, N \otimes _BJ)$ such that $\partial ^{\Conn} (\psi)=0$; 
\item $\E_N(J)$ splits as a sequence of DG $B$-modules;
\item $\E_N(X)$ splits as a sequence of DG $B$-modules for all DG $B^e$-modules $X$; 
\item $N$ admits a $\SDer _A(B, J)$-connection; 
\item $N$ admits a $\SDer _A(B, X)$-connection for all DG $B^e$-modules $X$.  
\end{enumerate}
\end{thm}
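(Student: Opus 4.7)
The plan is to establish a cycle of implications that exploits the universal role of $\delta$, namely the isomorphism $\varpi\colon \SHom_{B^e}(J, X) \xrightarrow{\cong} \SDer_A(B, X)$ of Proposition~\ref{derivation} under which every derivation factors uniquely as $D = g \circ \delta$. First I would dispose of the tautological equivalences: (iii) $\Leftrightarrow$ (v) and (iv) $\Leftrightarrow$ (vi) are immediate from the definition of an $\mathcal{L}$-connection in~\ref{para20250408d}, since such a homomorphism is precisely a DG $B$-module section of $\nu$ in the fundamental exact sequence of Theorem~\ref{fundamental}; also (iv) $\Rightarrow$ (iii) and (vi) $\Rightarrow$ (v) follow by specializing $X = J$.

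For (i) $\Leftrightarrow$ (ii) I would use~\cite[Proposition 4.10]{NOY3}, which says na\"{i}ve liftability is equivalent to the Atiyah map $\alpha$ being null-homotopic. By Corollary~\ref{Blinear}, any $\delta$-connection $\psi$ can be written uniquely as $\psi = \varphi(\delta) + \beta$ with $\beta \in \SHom_B(N, N\otimes_B J)_0$. Combining this with Proposition~\ref{atiyah} yields
\[
\partial^{\Conn}(\psi) = -\alpha + \partial^{\Hom}(\beta),
\]
so $\partial^{\Conn}(\psi) = 0$ iff $\beta$ is a null-homotopy for $\alpha$, which is exactly na\"{i}ve liftability. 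For (iii) $\Rightarrow$ (ii), I would take $\psi := \nabla_\delta$; then $\psi$ is a $\delta$-connection and, since $\partial^{\Der}(\delta) = 0$ by~\ref{para20250410a}, condition~(iv) of~\ref{para20250408d} gives $\partial^{\Conn}(\psi) = \nabla_0 = 0$.

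The central step is (ii) $\Rightarrow$ (iv), which closes the cycle and also yields (iii). Given a cycle $\delta$-connection $\psi$, for each DG $B^e$-module $X$ and each $D \in \SDer_A(B, X)$, I would use $\varpi$ to write $D = g \circ \delta$ uniquely and define $\nabla_D := (\id_N \otimes g) \circ \psi$. The $D$-connection property of $\nabla_D$ follows directly from the $\delta$-connection property of $\psi$ together with the $B^e$-linearity of $g$, and $B$-linearity and additivity of $D \mapsto \nabla_D$ are routine. The hard part is the chain-map condition~(iv) of~\ref{para20250408d}. Using the identity $\partial^{\Der}(g \circ \delta) = \partial^{\Hom}(g) \circ \delta$ (which holds precisely because $\delta$ is a cycle) together with the Koszul rule $\partial^{\Hom}(\id_N \otimes g) = \id_N \otimes \partial^{\Hom}(g)$, a careful computation with the Leibniz rule for composition reduces $\partial^{\Conn}(\nabla_D) - \nabla_{\partial^{\Der}(D)}$ to a scalar multiple of $(\id_N \otimes g)\circ \partial^{\Conn}(\psi)$, which vanishes by hypothesis. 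The main obstacle is this sign-bookkeeping, and the argument works cleanly only because $\delta$ is a degree-zero cycle and $\psi$ itself lives in degree zero.
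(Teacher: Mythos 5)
Your proposal is correct and follows essentially the same route as the paper: the equivalence (i)$\Leftrightarrow$(ii) via Proposition~\ref{atiyah} and the decomposition $\psi=\varphi(\delta)+\beta$, the identification of sections of $\nu$ with $\mathcal{L}$-connections for (iii)$\Leftrightarrow$(v) and (iv)$\Leftrightarrow$(vi), the cycle-transport argument for (iii)$\Rightarrow$(ii), and the key construction $\nabla_D=(\id_N\otimes_B\varpi^{-1}(D))\circ\psi$ for (ii)$\Rightarrow$(iv) all match the paper's proof. The only detail left implicit is the sign computation verifying that $\Psi$ is a chain map, but you correctly identify that it hinges on $\varpi$ commuting with differentials and on $\partial^{\Conn}(\psi)=0$, exactly as in the paper.
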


\begin{proof}
(ii)$\implies$(i):  
Assume that there exists a $\delta$-connection $\psi\in \SConn (N, N \otimes _BJ)$ such that $\partial ^{\Conn} (\psi)=0$. Then, $f= \psi - \varphi  (\delta)$  is $0$-connection by Proposition~\ref{sum of connections}\eqref{item20250408b}, and hence, $f$ is $B$-linear, that is, $f\in \SHom_B(N,N\otimes_BJ)$. Thus, 
$$
\partial ^{\Hom } (f) = \partial ^{\Conn} (\iota (f) ) =  - \partial^{\Conn} (\varphi (\delta))= \alpha 
$$
where the last equality comes from Proposition~\ref{atiyah}. Therefore, $\alpha$ is null homotopic. 

(i)$\implies$(ii):  
Assume that $N$ is naïvely liftable to $A$. By our discussion in~\ref{Atiyah}, the map $\alpha$ is null homotopic. 
Hence, there is a graded $B$-linear mapping $f\colon N \to N \otimes _B J$ such that  $\partial ^{\Conn} (\iota(f)) = \partial ^{\Hom } (f)=\alpha$. Set $\psi = f + \varphi (\delta)$. Using Proposition~\ref{atiyah}, we can check that $\psi$ satisfies the requirements in (ii). 

(iii)$\implies$(ii):  
Assume that $\E_N(J)$ splits as a sequence of DG $B$-modules. Hence, $\SDer _A(B, J)$ is isomorphic to a direct summand of $\SConn (N, N \otimes _B J)$ as a DG $B$-module. 
Since, as we mentioned in~\ref{para20250410a}, $\delta \in \SDer _A(B, J)$ is a cycle, there is a corresponding cycle $\psi$ in $\SConn (N, N \otimes _B J)$ which satisfies the requirements in (ii). 
 
(iv)$\Longleftrightarrow$(vi) (and (iii)$\Longleftrightarrow$(v)): 
Assume that, for all DG $B^e$-modules $X$, the short exact sequence $\E_N(X)$ of DG $B$-modules splits. Thus, $\nu$ is a split epimorphism, i.e., there is a DG $B$-module homomorphism $\Psi\colon \SDer _A(B, X) \to \SConn (N, N \otimes _BX)$ such that $\nu \circ \Psi = \id_{\SDer _A(B, X)}$. 
We define the $\SDer _A(B, X)$-connection $\nabla$ by setting $\nabla _D = \Psi(D)$, for all $D\in \SDer _A(B, X)$. 

Conversely, assume that for each DG $B^e$-modules $X$, the DG $B$-module $N$ admits a $\SDer _A(B, X)$-connection $\nabla$. Then, by setting $\Psi(D) = \nabla _D$, for all  $D \in \SDer _A(B, X)$, we have $\nu \circ \Psi = \id_{\SDer _A(B, X)}$, as desired.    
 
(ii)$\implies$(iv):  
Let $X$ be DG $B^e$-module, and assume that there exists a $\delta$-connection  $\psi\in \SConn (N, N \otimes _BJ)$ such that $\partial ^{\Conn} (\psi)=0$. Considering $\varpi$ from~\eqref{eq20250410b}, note that $\varpi^{-1}$ commutes with differentials, and thus, for all elements $D \in \SDer _A (B, X)$ we have the equality 
\begin{equation}\label{eq20250410d}
\varpi^{-1}(\partial ^{\Der} (D) ) = \partial ^{\Hom} (\varpi^{-1}(D)).
\end{equation}
Now, we define a map
$$
\Psi\colon \SDer _A(B, X) \to \SConn (N, N\otimes _B X)
$$  
by the formula
$$
\Psi (D) = (\id_N  \otimes_B \varpi^{-1}(D)) \circ \psi  
$$
for all homogeneous elements $D \in \SDer _A (B, X)$. Note that $|\Psi|=|\psi|$ and $\Psi (D)$ is a $D$-connection on $N$ along $X$. In fact, for all $b\in B$ and all homogeneous elements $x\in N$ we have the equalities
\begin{eqnarray*}
\Psi(D) (xb) &=& (\id_N  \otimes_B \varpi^{-1}(D))(\psi (xb)) \\ 
&=&  (\id_N  \otimes_B \varpi^{-1}(D))(\psi(x)b + x\otimes_B \delta(b)) \\
&=& \Psi(D)(x)b + (-1)^{|x||D|} x \otimes_B D(b)
\end{eqnarray*}
where the last equality comes from~\eqref{eq20250410c}. Hence, by definition of $\nu$ we have  $(\nu \circ \Psi)(D)=D$, that is, $\nu \circ \Psi=\id_{\SDer _A (B, X)}$. 
It is straightforward to see that $\Psi$ is a graded $B$-module homomorphism. It now remains to show that $\partial^{\Conn} \circ \Psi = \Psi \circ \partial ^{\Der}$, i.e., $\Psi$ is a DG $B$-module homomorphism.   
To see this equality, let $D \in \SDer _A (B, X)$ be a homogeneous element. Then, we have the equalities
\begin{eqnarray*} 
(\partial ^{\Conn} \circ \Psi  )(D)\!\!\!\!&=&\!\!\!\! \partial ^{\Conn} (\Psi (D) )  \\
 &=&\!\!\!\!\partial ^{N \otimes_B X} \circ  \Psi  (D)  - (-1)^{|\Psi (D)|} \Psi (D) \circ \partial^N   \\
 &=&\!\!\!\!\partial ^{N \otimes_B X} \circ  (\id_N\otimes_B \varpi^{-1}(D))\circ \psi  - (-1) ^{|\psi| +|D|}(\id_N\otimes_B \varpi^{-1}(D))\circ \psi \circ \partial ^N \\
 &=&\!\!\!\!\partial ^{N \otimes_B X} \circ  (\id_N\otimes_B \varpi^{-1}(D))\circ \psi  - (-1)^{|D|} (\id_N\otimes_B \varpi^{-1}(D))\circ \partial ^{N\otimes_B J} \circ \psi  \\
 &=&\!\!\!\!    \left[\id_N\otimes_B (\partial ^X \circ \varpi^{-1}(D))  + (\partial ^N \otimes_B \id_X) (\id_N\otimes_B \varpi^{-1}(D) ) \right] \circ \psi 
  \\ &&\!\!\!\! -(-1)^{|D|}  \left[ \left(\id_N\otimes_B \varpi^{-1}(D)\right)\left(\partial ^{N} \otimes_B \id_X\right) + \id_N\otimes_B (\varpi^{-1}(D) \circ \partial ^{J}) \right] \circ \psi   \\
 &=&\!\!\!\!    \left(\id_N\otimes_B (\partial ^X \circ \varpi^{-1}(D)\right) \circ \psi  - (-1)^{|D|}  \left(\id_N\otimes_B (\varpi^{-1}(D) \circ \partial ^{J})\right) \circ \psi   \\
 &=&\!\!\!\!   \left(\id_N\otimes_B \partial ^{\Hom} (\varpi^{-1}(D)\right)  \circ \psi   \\
 &=&\!\!\!\!   \left(\id_N\otimes_B \varpi^{-1}(\partial ^{\Der} (D) \right)\circ \psi  \\
 &=&\!\!\!\!  \left(\Psi  \circ \partial ^{\Der} \right)(D)
\end{eqnarray*}
where the third equality follows from the assumption that $\partial ^{\Conn} (\psi)=0$, the sixth equality follows from the fact that
$$
(\partial ^N \otimes_B \id_X) (\id_N\otimes_B \varpi^{-1}(D) )=(-1)^{|D|}\left(\id_N\otimes_B \varpi^{-1}(D)\right)\left(\partial ^{N} \otimes_B \id_X\right)
$$
and finally, the eighths equality comes from~\eqref{eq20250410d}. This computation completes the proof. 
\end{proof}

\begin{cor}
Let $N$ be a semifree DG $B$-module. If the homology mapping $$\HH_0(\nu)\colon \HH_0(\SConn (N, N \otimes _BJ)) \to \HH_0(\SDer _A(B, J))$$ 
induced by $\nu$ from~\ref{para20250409a} is surjective, then $N$ is naïvely liftable to $A$.
\end{cor}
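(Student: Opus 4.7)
The plan is to deduce the conclusion from Theorem~\ref{naive}, specifically from the implication (ii)$\implies$(i): it suffices to exhibit a $\delta$-connection $\psi \in \SConn(N, N\otimes_B J)$ with $\partial^{\Conn}(\psi)=0$. The starting observation is that, as noted in~\ref{para20250410a}, the universal derivation $\delta$ is a cycle in $\SDer_A(B,J)_0$ and hence determines a class $[\delta]\in \HH_0(\SDer_A(B,J))$.

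The first step is to apply the surjectivity hypothesis: there exists a cycle $\psi_0\in \SConn(N, N\otimes_B J)_0$ whose class maps to $[\delta]$. Equivalently, $\nu(\psi_0)-\delta$ is a boundary, so there is some $D\in \SDer_A(B,J)_1$ with
$$
\nu(\psi_0)=\delta+\partial^{\Der}(D).
$$
The second step is a correction that replaces $\psi_0$ by a genuine $\delta$-connection. Since $N$ is semifree, and in particular free as an underlying graded $B$-module, construction~\ref{trivial connection} produces a graded $B$-linear section $\varphi\colon \SDer_A(B,J)\to \SConn(N, N\otimes_B J)$ of $\nu$. I would then set
$$
\psi \;=\; \psi_0-\partial^{\Conn}(\varphi(D)).
$$
Because $(\partial^{\Conn})^2=0$ and $\psi_0$ is a cycle, the element $\psi$ is still a cycle. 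Moreover, since $\nu$ is a DG $B$-module homomorphism by Theorem~\ref{fundamental}, one has
$$
\nu(\psi)=\nu(\psi_0)-\partial^{\Der}(\nu(\varphi(D)))=\delta+\partial^{\Der}(D)-\partial^{\Der}(D)=\delta,
$$
so $\psi$ is a $\delta$-connection with $\partial^{\Conn}(\psi)=0$, and Theorem~\ref{naive} concludes.

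There is no serious obstacle; the only subtlety is that, for a semifree (rather than free) DG module, the section $\varphi$ from~\ref{trivial connection} is only a graded $B$-linear map and need not commute with the differentials (indeed $\partial^{\Conn}\circ \varphi$ measures the Atiyah homomorphism, as reflected in Proposition~\ref{atiyah}). The correction $\partial^{\Conn}(\varphi(D))$ is chosen precisely so that this failure becomes irrelevant: $\partial^{\Conn}$ kills it automatically, while $\nu$ reduces it to $\partial^{\Der}(D)$ because $\nu$ is a DG map.
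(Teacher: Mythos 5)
Your proof is correct and follows essentially the same route as the paper: both arguments lift the degree-zero cycle $\delta$ through $\nu$ to a cycle in $\SConn(N,N\otimes_BJ)$ and then invoke Theorem~\ref{naive}, (ii)$\implies$(i). The only cosmetic difference is that the paper obtains the boundary correction abstractly (a surjective DG map is surjective on boundaries, hence, given surjectivity on $\HH_0$, surjective on $Z_0$), whereas you produce the same correction explicitly as $\partial^{\Conn}(\varphi(D))$ via the trivial-connection section.
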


\begin{proof}
As we mentioned in~\ref{para20250410a}, the universal derivation $\delta$ is a cycle of degree $0$ in $\SDer _A(B, J)$. 
Since  $\nu$  is a surjective DG $B$-module homomorphism, it induces the surjection on the boundary sets. 
Thus, if $\HH_0(\nu)$ is a surjection, then it induces a surjective mapping 
$Z_0(\SConn (N, N \otimes _BJ)) \to Z_0(\SDer _A(B, J))$ between the cycle sets as well. Hence, $\delta$ is the image of a cycle element of $\SConn (N, N \otimes _BJ)$ under $\nu$.  
Therefore, condition (ii) in Theorem~\ref{naive} is satisfied, which is equivalent to $N$ being naïvely liftable to $A$. 
\end{proof}
 
\section{Naïve lifting and connections along DG $B$-modules}\label{sec20250421d}

Theorem~\ref{naive} characterizes naïve liftability with respect to connections along DG $B^e$-modules, and in particular, along the diagonal ideal $J$. In this section, we pay a closer attention to connections along DG $B$-modules considered as DG $B^e$-modules via the isomorphism $B\cong B^e/J$, including $B$ itself. As we mention at the end of this section, unlike Theorem~\ref{naive}, in this situation we do not know whether naïve liftability can be characterized with respect to connections along DG $B$-modules. However, our main result in this section, namely Theorem~\ref{fesOX}, introduces several equivalent conditions some of which are similar to the ones in Theorem~\ref{naive} and an additional condition that is related to the Kodaira-Spencer homomorphism. As a final reminder, we still use the notation that we established in the previous sections.

%\begin{para}
%For a DG $B$-module $N$, there are fundamental exact sequences $\E_N(B)$ and $\E_N(\Omega)$ of connections on $N$ along $B$ and $\Omega$, respectively. More precisely, we have the short exact sequences 
%\begin{gather*} 
%\!\!\!\!\!\!\!\!\!\!\!\!\!\!\!\!\!\!\!\!\!\!\!\!\!\!\!\!\!\!\!\!\!\!\!\!\!\!\!\!\!\!\!\!\!\!\!\!\!\!\!\!\!\!\!\!\!\!\E_N(B): 0 \to \SEnd_B(N) \xra{\iota} \SConn (N) \xra{\nu} \SDer_A(B)  \to 0\\
%\E_N (\Omega): 0 \to \SHom _B(N, N \otimes _B \Omega) \xra{\iota} \SConn (N, N \otimes _B\Omega)  \xra{\nu} \SDer _A(B, \Omega)  \to 0.
%\end{gather*}
%Let $X$ be another DG $B$-module, i.e., $X$ is a DG $B^e$-module that is annihilated by $J$. Then, we also have the fundamental exact sequence $\E_N(X)$ of connections of $N$ along $X$. 

%The \emph{classical Atiyah map} $\bar\alpha\colon N \to N \otimes _B \Omega(-1)$ is defined to be a composition of the Atiyah homomorphism $\alpha\colon N \to N \otimes_B J(-1)$ and the natural projection $J \to \Omega$. 
%\end{para}
 
\begin{para}
The \emph{classical Atiyah map} $\bar\alpha\colon N \to N \otimes _B \Omega(-1)$ is defined to be the composition $(\id_N\otimes_B \pi)\circ \alpha$, where $\alpha\colon N \to N \otimes_B J(-1)$ is the Atiyah homomorphism introduced in~\ref{Atiyah} and $\id_N\otimes_B \pi\colon N\otimes_BJ(-1)\to N\otimes_B \Omega$ is the map induced by the natural projection $\pi\colon J \to \Omega=J/J^2$. 
\end{para}

\begin{para}
Let $N$ be a semifree DG $B$-module and $\bar\delta\colon B \to \Omega=J/J^2$ be the natural map induced by the universal derivation $\delta\colon B\to J$. Then, one can define a trivial $\bar\delta$-connection $\varphi ( \bar\delta)\colon N \to N \otimes _B \Omega$ with respect to a certain free basis of $N$. Then, similar to Proposition~\ref{atiyah}, we can see that $\bar\alpha = - \partial ^{\Conn} (\varphi (\bar\delta))$.
\end{para}

\begin{para}
Let $N$ be a semifree DG $B$-module with a semifree basis $\mathcal{B}=\{e_{\lambda}\mid \lambda\in \Lambda\}$. For an $A$-derivation $D  \in \SDer _A(B)$, the trivial $D$-connection $\varphi (D)$ on $N$ along $B$, discussed in~\ref{trivial connection}, can be identified by the map  $\varphi (D)\colon N \to N$ defined as $\varphi ( e_{\lambda} b )= e_{\lambda}D(b)$, for all $b \in B$ and $\e_{\lambda}\in \mathcal{B}$.
In this setting, we define the mapping  
$$
\kappa\colon \SDer _A (B) \to \SEnd _B(N) (-1)
$$
by the formula
\begin{equation}
\kappa (D) = [ \partial ^N, \varphi (D)] - \varphi ( [d^B, D]) 
\end{equation}
for all homogeneous elements $D \in \SDer _A (B)$. We call $\kappa$ the \emph{Kodaira-Spencer homomorphism on $N$}. 
Note that, by Proposition~\ref{sum of connections}\eqref{item20250408c}, for any homogeneous element $D \in \SDer _A (B)$,  both  $[\partial ^N, \varphi (D)]$ and $\varphi ([d^B, D]) $ are $[d^B, D]$-connections on $N$ of the same degree $|D|-1$. Hence, $\kappa (D)\colon N \to N(|D|-1)$ is a $B$-linear map by Corollary~\ref{Blinear} and therefore, it is well-defined. Furthermore, it is straightforward to see that $\kappa$ is a DG $B$-module homomorphism. 
\end{para}

\begin{para}\label{para20250410h}
We say that the DG $R$-algebra homomorphism $A \to B$ \emph{has semifree differential module} if $\Omega=J/J^2$ is a semifree DG $B$-module, i.e., if there is a semifree basis $\mathcal{B}=\{u_{\lambda}\mid \lambda\in \Lambda\}$ indexed by a well-ordered set $\Lambda$ such that $\{\lambda \mid |u_{\lambda}| = n\}$ is a finite set for all integers $n$ and that
$\partial ^\Omega (u_{\lambda})$ belongs to $\sum _{\mu<\lambda} u_{\mu}B$ for all $\lambda\in \Lambda$. In this setting, $\Omega = \bigoplus _{\lambda\in\Lambda} u_{\lambda} B$ is the underlying graded free $B$-module and for each $u_{\lambda}\in \mathcal{B}$ there exist $c_{\mu\lambda} \in B$ such that
$$
\partial ^{\Omega} (u_{\lambda}) = \sum _{\mu<\lambda} u_{\mu} c_{\mu\lambda}.
$$

Note that both of the DG algebra extensions discussed in Example~\ref{important} have semifree differential module; see~\cite{NOY1}. In fact, many of quasi-smooth DG algebra extensions in the sense of~\cite{NOY1} satisfy this condition. In this sense, having a semifree differential module is nearly equivalent to being quasi-smooth.   

\end{para}

The main result of this section is the following theorem.

\begin{thm}\label{fesOX}
For a semifree DG $B$-module $N$ the following are equivalent: 
\begin{enumerate}[\rm(i)]
\item
The classical Atiyah map $\bar\alpha$ is null homotopic; 
\item 
There is a $\bar\delta$-connection  $\psi \in \SConn (N, N \otimes _B \Omega)$ such that $\partial ^{\Conn}(\psi)=0$;   
\item
$\E_N(\Omega)$ splits as a sequence of DG $B$-modules; 
\item
$\E_N(X)$ splits as a sequence of DG $B$-modules for all DG $B$-modules $X$; 
\item 
$N$ admits a $\SDer _A(B, \Omega)$-connection;
\item 
$N$ admits a $\SDer _A(B, X)$-connection for all DG $B$-modules $X$.\vspace{2mm}

\noindent \!\!\!\!\!\!\!\!\!\!\!\!\!\!Furthermore, if $N$ is non-negatively graded and $A \to B$ has semifree differential \

\noindent \!\!\!\!\!\!\!\!\!\!\!\!\!\!\!\! module, then
the above conditions are equivalent to the following:\vspace{2mm}

\item $\E_N(B)$ splits as a sequence of DG $B$-modules;
\item  $N$ admits a $\SDer_A(B)$-connection;
\item The Kodaira-Spencer homomorphism $\kappa$ is null homotopic.
\end{enumerate}
\end{thm}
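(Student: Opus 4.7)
The plan is to mirror the proof of Theorem~\ref{naive} for the equivalences (i)--(vi) with $(J,\delta,\alpha)$ replaced by $(\Omega,\bar\delta,\bar\alpha)$, and then to handle (vii)--(ix) by identifying the Kodaira--Spencer homomorphism as the obstruction to the graded splitting $\varphi$ of $\nu\colon\SConn(N)\to\SDer_A(B)$ being a chain map, finally closing the loop under the extra hypotheses via the dual basis of $\SDer_A(B)$ supplied by the semifree differential module.

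For (i)--(vi), I would prove (i) $\Leftrightarrow$ (ii) using the identity $\bar\alpha=-\partial^{\Conn}(\varphi(\bar\delta))$ and Corollary~\ref{Blinear}, following Theorem~\ref{naive}(i) $\Leftrightarrow$ (ii) verbatim. The equivalences (iii) $\Leftrightarrow$ (v) and (iv) $\Leftrightarrow$ (vi) are the standard splitting-connection correspondence applied to the fundamental sequence $\E_N(X)$ of Theorem~\ref{fundamental}. For (iii) $\Rightarrow$ (ii) I would extract $\psi$ as the image under a splitting of the cycle $\bar\delta\in\SDer_A(B,\Omega)$, which is a cycle because $J\onto\Omega$ is a DG projection and $\delta$ is a cycle. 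The crucial step (ii) $\Rightarrow$ (iv) is the direct analog of Theorem~\ref{naive}(ii) $\Rightarrow$ (iv): using the isomorphism~\eqref{eq20250410e}, which I denote $\varpi'\colon\SHom_B(\Omega,X)\xrightarrow{\cong}\SDer_A(B,X)$, for any DG $B$-module $X$ I would define $\Psi(D):=(\id_N\otimes_B(\varpi')^{-1}(D))\circ\psi$; the computation that $\Psi$ is a DG splitting of $\nu$ copies the one in Theorem~\ref{naive} line-for-line, with $\partial^{\Conn}(\psi)=0$ forcing the key cancellation.

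For (vii)--(ix), specializing $X=B$ yields (iv) $\Rightarrow$ (vii) at once, and (vii) $\Leftrightarrow$ (viii) is again the splitting-connection correspondence applied to $\E_N(B)$. For (viii) $\Leftrightarrow$ (ix), a brief unwinding of definitions gives $\kappa(D)=\partial^{\Conn}(\varphi(D))-\varphi(\partial^{\Der}(D))$, exhibiting $\kappa$ as the precise obstruction to $\varphi$ being a DG splitting of $\nu$. Thus if $\kappa=\partial^{\Hom}(h)$ for some graded $h\colon\SDer_A(B)\to\SEnd_B(N)$ of degree $0$ (viewed inside $\SConn(N)$ via $\iota$ as a family of $0$-connections, cf.~\ref{0-connection}), then $\nabla_D:=\varphi(D)-h(D)$ is a $D$-connection by Proposition~\ref{sum of connections}\eqref{item20250408b}, and the identity $\partial^{\Conn}(\nabla_D)=\nabla_{\partial^{\Der}(D)}$ holds, giving a $\SDer_A(B)$-connection; conversely, from any such $\nabla$, the difference $h(D):=\varphi(D)-\nabla_D$ is a $0$-connection, hence $B$-linear, with $\partial^{\Hom}(h)=\kappa$.

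The main obstacle is closing the loop, namely showing (viii) $\Rightarrow$ (vi), since in general a splitting of $\E_N(B)$ need not extend to one of $\E_N(X)$ for arbitrary DG $B$-modules $X$. The extra hypotheses are tailored for this step. Adapting Example~\ref{important} to the semifree differential module setting of~\ref{para20250410h}, the semifree basis $\{u_\lambda\}$ of $\Omega$ produces a topological dual basis $\{\partial_\lambda\}\subset\SDer_A(B)\cong\SHom_B(\Omega,B)$ such that every $D\in\SDer_A(B,X)\cong\SHom_B(\Omega,X)$ admits a unique expansion as a convergent series $D=\sum_\lambda\tilde D(u_\lambda)\,\partial_\lambda$ with coefficients $\tilde D(u_\lambda)\in X$, where $\tilde D:=(\varpi')^{-1}(D)$. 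Given a $\SDer_A(B)$-connection $\nabla$, I would then define
\begin{equation*}
\Psi(D)(n):=\sum_\lambda(-1)^{\epsilon_\lambda}\,\nabla_{\partial_\lambda}(n)\otimes_B\tilde D(u_\lambda)
\end{equation*}
with appropriate Koszul signs $\epsilon_\lambda$. Non-negativity of $N$, together with the finiteness of each set $\{\lambda\mid|u_\lambda|=m\}$, makes each such sum finite on elements of bounded degree in $N$, hence convergent in the complete topological module $\SConn(N,N\otimes_BX)$. The verifications that $\Psi(D)$ is a $D$-connection, that $\Psi$ is $B$-linear in $D$, and that $\Psi$ commutes with differentials all reduce term by term to the corresponding properties of $\nabla$ on the dual-basis derivations $\partial_\lambda$, completing the implication (viii) $\Rightarrow$ (vi).
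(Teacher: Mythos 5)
Your proposal is correct and follows essentially the same route as the paper: the equivalences (i)--(vi) by transporting the proof of Theorem~\ref{naive} from $(J,\delta,\alpha)$ to $(\Omega,\bar\delta,\bar\alpha)$, the identification of the Kodaira--Spencer map as the obstruction $\kappa(D)=\partial^{\Conn}(\varphi(D))-\varphi(\partial^{\Der}(D))$ for (vii)$\Leftrightarrow$(viii)$\Leftrightarrow$(ix), and the dual-basis series construction (with finiteness forced by non-negativity of $N$ and the degreewise finiteness of $\{u_\lambda\}$) to close the loop. The only cosmetic difference is that you close the loop via (viii)$\Rightarrow$(vi) directly, defining $\Psi(D)$ for every DG $B$-module $X$, whereas the paper proves (vii)$\Rightarrow$(ii) by constructing the single $\bar\delta$-connection $\psi=\sum_\lambda\pm\,\Psi(\partial_\lambda)(-)\otimes_B u_\lambda$ --- exactly your formula specialized to $D=\bar\delta$, $\tilde D=\id_\Omega$ --- and then recycles (ii)$\Rightarrow$(iv); the underlying verification, resting on Lemma~\ref{sublemma}, is the same in both versions.
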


The proof of this theorem will be given after the following preparations.

\begin{para}\label{para20250410v}
Assume that $A \to B$ has semifree differential and consider the setting of~\ref{para20250410h}. Since, from~\eqref{eq20250410e}, there is an isomorphism $g\colon  \SDer _A(B ) \xra{\cong}  {}^*\!\Hom _B (\Omega, B)$ of DG $B$-modules, 
we find  $\partial _{\lambda} \in \SDer _A(B)$ such that  $|\partial_{\lambda}| = - |u_{\lambda}| <0$ and 
\begin{equation}\label{guj}
g(\partial _{\lambda})(u_{\mu}) = \begin{cases}
1 &  (\lambda=\mu) \\ 0 & (\lambda \not= \mu).  \end{cases}
\end{equation}
\end{para}

\begin{lem}\label{sublemma}
Considering the setting of~\ref{para20250410h} and~\ref{para20250410v}, the following assertions hold: 
\begin{enumerate}[\rm(a)]
\item 
For all $\lambda\in \Lambda$ we have the equality
\begin{equation}\label{eq20250410f}
\partial^{\Der} (\partial _{\lambda} ) =(-1)^{|u_{\lambda}|+1} \sum _{\upsilon > \lambda}  c_{\lambda \upsilon}\partial _{\upsilon}.
\end{equation}
\item
For all $b \in B$ we have the equality 
\begin{equation}\label{eq20250410g}
\bar\delta (b) = \sum _{\lambda}  u_{\lambda} \partial_{\lambda} (b).
\end{equation}
\end{enumerate}
\end{lem}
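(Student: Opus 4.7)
My plan is to deduce both (a) and (b) by transferring the identities through the isomorphism $g\colon \SDer_A(B)\xra{\cong}\SHom_B(\Omega,B)$ of Proposition~\ref{derivation} (the case $X=B$), which is characterized by $g(D)\circ\bar\delta=D$ and under which, by~\eqref{guj}, $g(\partial_\lambda)(u_\mu)=\delta_{\lambda\mu}$. In both parts each statement then becomes a verification on the free basis $\{u_\mu\}$ of the underlying graded $B$-module $\Omega=\bigoplus_\lambda u_\lambda B$.

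For (a), since $g$ is a DG $B$-module isomorphism it intertwines $\partial^{\Der}$ with $\partial^{\Hom}$, so~\eqref{eq20250410f} is equivalent to the identity
$$\partial^{\Hom}(g(\partial_\lambda)) \;=\; (-1)^{|u_\lambda|+1}\sum_{\upsilon>\lambda}c_{\lambda\upsilon}\,g(\partial_\upsilon)$$
in $\SHom_B(\Omega,B)$. Applying~\eqref{eq20250408b} and evaluating the left-hand side at $u_\mu$, the term $d^B(g(\partial_\lambda)(u_\mu))$ vanishes since $g(\partial_\lambda)(u_\mu)\in\{0,1\}\subseteq B_0$; substituting $\partial^\Omega(u_\mu)=\sum_{\nu<\mu}u_\nu c_{\nu\mu}$ and using the $B$-linearity of $g(\partial_\lambda)$ collapses the remaining term to $(-1)^{|u_\lambda|+1}c_{\lambda\mu}$ when $\lambda<\mu$ and to $0$ otherwise. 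The right-hand side, evaluated at $u_\mu$, produces the same value by $g(\partial_\upsilon)(u_\mu)=\delta_{\upsilon\mu}$. Since two graded $B$-linear maps $\Omega\to B$ agreeing on every basis element coincide, (a) follows.

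For (b), the cornerstone observation is the tautological identity
$$\id_\Omega \;=\; \sum_\lambda u_\lambda\cdot g(\partial_\lambda)\quad\text{in}\quad\SHom_B(\Omega,\Omega),$$
which holds because both sides send each basis element $u_\mu$ to $u_\mu$, and on any fixed $x\in\Omega$ the sum has only finitely many nonzero summands coming from the expansion of $x$ in $\{u_\mu\}$. Substituting $x=\bar\delta(b)$ and invoking $g(\partial_\lambda)\circ\bar\delta=\partial_\lambda$ produces
$$\bar\delta(b) \;=\; \id_\Omega(\bar\delta(b)) \;=\; \sum_\lambda u_\lambda\,\partial_\lambda(b),$$
which is~\eqref{eq20250410g}; the sum is finite on each homogeneous $b$ by the degree-finiteness condition on $\{u_\lambda\}$ together with the non-negativity of the degrees $|u_\lambda|$ inherited from the non-negatively graded $\Omega$.

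The main technical obstacle lies in the Koszul sign bookkeeping of part (a) — verifying that the DG $B$-module isomorphism $g$ does intertwine $\partial^{\Der}$ with $\partial^{\Hom}$ with the correct sign $(-1)^{|\partial_\lambda|}$, and that no extraneous signs enter when unwinding $g(\partial_\lambda)(\partial^\Omega(u_\mu))$ via the right $B$-action on $\Omega$ and the left $B$-action on $\SHom_B(\Omega,B)$.
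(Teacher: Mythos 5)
Your proposal is correct and follows essentially the same route as the paper: both parts are transferred through the DG $B$-module isomorphism $g\colon \SDer_A(B)\to\SHom_B(\Omega,B)$ and verified on the basis $\{u_\mu\}$, with the $d^B$-term vanishing because $g(\partial_\lambda)(u_\mu)\in\{0,1\}$; your part (b), phrased via the dual-basis expansion of $\id_\Omega$, is just a repackaging of the paper's direct computation of the coefficients $b_\mu=g(\partial_\mu)(\bar\delta(b))=\partial_\mu(b)$.
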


The right-hand side of~\eqref{eq20250410f} is a power series, and hence, it is an infinite sum. On the other hand, the right-hand side of~\eqref{eq20250410g} is a finite sum.

\begin{proof}
(a) By definition, for all $\lambda, \mu\in \Lambda$, we have the equalities
\begin{eqnarray*}
 g(\partial^{\Der} (\partial _{\lambda} ))(u_{\mu}) &=&\left( \partial ^{\Hom} (g(\partial _{\lambda}))\right)(u_{\mu}) \\ 
&=& d^B \left(g(\partial _{\lambda})(u_{\mu})\right) - (-1)^{|\partial_{\lambda}|}  g(\partial _{\lambda}) (\partial ^\Omega (u_{\mu}))  \\
&=& (-1)^{|\partial_{\lambda}|+1}g(\partial _{\lambda})\left(\sum _{\upsilon<\mu} u_{\upsilon} c_{\upsilon\mu} \right)\\  
&=& (-1)^{|u_{\lambda}|+1} \sum _{\upsilon<\mu} \left(g(\partial _{\lambda}\right)(u_{\upsilon}) )c_{\upsilon\mu} \\
&=& \begin{cases} (-1)^{|u_{\lambda}|+1} c_{\lambda\mu} & \lambda < \mu \\ 0 & \lambda \geq \mu.  \end{cases}
\end{eqnarray*} 
On the other hand, we have the equalities
\begin{eqnarray*} 
 g\left(\sum _{\upsilon > \lambda} (-1)^{|u_{\lambda}|+1} c_{\lambda\upsilon}\partial _{\upsilon}\right)(u_{\mu}) 
&=& \sum _{\upsilon > \lambda} (-1)^{|u_{\lambda}|+1} c_{\lambda\upsilon}g(\partial _{\upsilon}) (u_{\mu} ) \\ 
&=&  \begin{cases} (-1)^{|u_{\lambda}|+1} c_{\lambda\mu} & \lambda < \mu \\ 0 & \lambda \geq \mu  \end{cases}
\end{eqnarray*} 
where the first equality comes from the fact that $g$ is continuous. Hence, 
$$
g\left(\partial^{\Der} (\partial _{\lambda} )\right)=g\left(\sum _{\upsilon > \lambda} (-1)^{|u_{\lambda}|+1} c_{\lambda\upsilon}\partial _{\upsilon}\right)
$$
which implies the desired equality~\eqref{eq20250410f}. 

(b) Note that the isomorphism $g$ is given by the equality 
$g(D) \circ \bar\delta = D$, for all $D \in \SDer _A(B)$. 
In particular, for any $\mu\in \Lambda$, we have $g(\partial_{\mu}) (\bar\delta (b)) = \partial _{\mu} (b)$. 
Since  $\Omega$ is generated by $\mathcal{B}$ as an underlying graded $B$-module, we can write  $\bar\delta (b) = \sum _{\lambda}   u_{\lambda} b_{\lambda} $ for $b_{\lambda} \in B$. 
It then follows from~\eqref{guj} that $b_{\mu} = g(\partial_{\mu})(\bar\delta (b)) = \partial _{\mu} (b)$, which implies the equality~\eqref{eq20250410g}, as desired.  
\end{proof}

\noindent \emph{Proof of Theorem~\ref{fesOX}.}
The equivalences of (i)-(vi) are shown by the same arguments as in the proof of Theorem~\ref{naive}. Also, the implication (iv)$\implies$(vii) is trivial and (vii)$\Longleftrightarrow$(viii) is proven similar to the equivalence (iv)$\Longleftrightarrow$(vi).

(vii)$\implies$(ii): Assume that $\E_N(B)$ splits as a sequence of DG $B$-modules. Consider the setting of~\ref{para20250410h} and~\ref{para20250410v}. Since  $\nu\colon \SConn (N) \to \SDer _A(B)$ is a split epimorphism, there is a DG $B$-module homomorphism $\Psi\colon \SDer _A(B) \to \SConn (N)$ such that $\nu \circ \Psi = \id_{\SDer _A(B)}$. For each $\lambda\in\Lambda$, set $\psi _{\lambda}= \Psi (\partial _{\lambda})$ and note that $\psi _{\lambda} \in \SConn (N)$ is a $\partial _{\lambda}$-connection with  $| \psi _{\lambda}| = |\partial_{\lambda}|=  -|u_{\lambda}|$. 
Now, we define a map $\psi\colon N \to N \otimes _B \Omega$ by the formula
\begin{equation}\label{eq20250411a}
\psi (x) = \sum _{\lambda} (-1)^{(|x|+ |u_{\lambda}|)|u_{\lambda}|}\psi _{\lambda} (x) \otimes_B u_{\lambda} 
\end{equation}
for all homogeneous elements $x \in N$ and $u_{\lambda}\in\mathcal{B}$. Note that the sum on the right-hand side of~\eqref{eq20250411a} is a finite sum because $\psi _{\lambda} (x) = 0$ if $|x| < |u_{\lambda}| = -|\partial _{\lambda}|$ and there are only finitely many $\lambda$ satisfying the inequality $|u_{\lambda}| \leq |x|$. We show that $\psi$  satisfies condition (ii). For all homogeneous elements $x\in N$ and $u_{\lambda}\in\mathcal{B}$ we have 
\begin{eqnarray*} 
\psi (xb)\!\!\!\! &= &\!\!\!\! \sum _{\lambda} (-1)^{(|x|+|b|+ |u_{\lambda}|)|u_{\lambda}|}\psi _{\lambda} (xb) \otimes_B u_{\lambda}  \\
&=&\!\!\!\! \sum _{\lambda}  (-1)^{(|x|+|b|+ |u_{\lambda}|)|u_{\lambda}|}\left( \psi _{\lambda} (x) b\otimes_B u_{\lambda} + (-1)^{|x||\partial _{\lambda}|}x \partial _{\lambda}(b) \otimes_B u_{\lambda}  \right)\\ 
&=&\!\!\!\! \sum _{\lambda}  (-1)^{(|x|+|b|+ |u_{\lambda}|)|u_{\lambda}|}\psi _{\lambda} (x) \otimes_B bu_{\lambda} +   x \otimes_B \sum _{\lambda} (-1)^{(|b|+ |u_{\lambda}|)|u _{\lambda}|}\partial _{\lambda}(b)u_{\lambda}  \\
&=&\!\!\!\! \sum _{\lambda}   (-1)^{(|x|+ |u_{\lambda}|)|u_{\lambda}|} \psi _{\lambda} (x) \otimes_B  u_{\lambda} b +   x \otimes_B \sum _{\lambda} u_{\lambda} \partial _{\lambda}(b) \\
&=&\!\!\!\!  \psi (x)b + x \otimes_B \bar\delta (b). 
\end{eqnarray*}
where the last equality follows from~\eqref{eq20250410g}. Therefore, $\psi$ is a $\bar\delta$-connection. It now remains to prove $\partial ^{\Conn} (\psi) = 0$.
For this purpose, again, let $x \in N$ be a homogeneous element and $u_{\lambda}\in \mathcal{B}$.  Then, we have the equalities 
\begin{eqnarray}
(\partial ^{\Conn} (\psi ))(x)\!\!\!\!  &=&\!\!\!\! \left(\partial ^{N \otimes_B \Omega} \circ  \psi  - \psi \circ \partial^N \right) (x)\notag  \\
&=&\!\!\!\! \partial ^{N \otimes_B \Omega} \left( \sum _{\lambda} (-1)^{(|x|+|u_{\lambda}|)|u_{\lambda}|}\psi _{\lambda} (x) \otimes_B u_{\lambda} \right)\notag\\ 
&&- \sum _{\lambda} (-1)^{(|x|-1+ |u_{\lambda}|)|u_{\lambda}|}\psi _{\lambda} (\partial^N(x))  \otimes_B u_{\lambda}\notag  \\ 
&=&\!\!\!\! 
\sum _{\lambda} (-1)^{(|x|+|u_{\lambda}|)|u_{\lambda}|}\!\! \left( \partial ^N(\psi_{\lambda}(x)) \otimes_B u_{\lambda} + (-1) ^{|x|+|u_{\lambda}|} \psi _{\lambda} (x) \otimes_B \partial ^{\Omega}(u_{\lambda})\right)\notag\\
&&-\sum _{\lambda} (-1)^{(|x|-1+ |u_{\lambda}|)|u_{\lambda}|}\psi _{\lambda} (\partial^N(x))  \otimes_B u_{\lambda}\notag \\ 
&=&\!\!\!\!  
\sum _{\lambda} (-1)^{(|x|+|u_{\lambda}|)|u_{\lambda}|} \left( \partial ^N(\psi_{\lambda}(x)) - (-1)^{|u_{\lambda}|}\psi _{\lambda} (\partial^N(x)) \right) \otimes_B u_{\lambda} \notag\\
&&+  \sum _{\lambda} (-1)^{(|x|+|u_{\lambda}|)|u_{\lambda}|+|x| +|u_{\lambda}|}  \psi _{\lambda} (x) \otimes_B \partial ^{\Omega}(u_{\lambda})\notag \\ 
&=&\!\!\!\!  
\sum _{\lambda} (-1)^{(|x|+|u_{\lambda}|)|u_{\lambda}|} \partial^{\Conn} (\psi_{\lambda}) (x) \otimes_B u_{\lambda}\notag \\
&&+  \sum _{\lambda} (-1)^{(|x|+|u_{\lambda}|)|u_{\lambda}|+|x| +|u_{\lambda}|}  \psi _{\lambda} (x) \otimes_B \partial ^{\Omega}(u_{\lambda}).\label{eq20250411s}
\end{eqnarray}  
For each $\lambda\in \Lambda$, the equalities 
\begin{eqnarray} 
\partial^{\Conn} (\psi_{\lambda}) &=& \partial^{\Conn} (\Psi(\partial_{\lambda}))\notag\\
&=& \Psi (\partial ^{\Der} (\partial_{\lambda}))\notag \\
&=& \Psi \left((-1)^{|u_{\lambda}|+1}\sum _{\upsilon>\lambda} c_{\lambda\upsilon} \partial _{\upsilon} \right)\notag\\
&=&  (-1)^{|u_{\lambda}|+1}\sum _{\upsilon>\lambda} c_{\lambda\upsilon} \psi _{\upsilon}\label{eq20250411x}  
\end{eqnarray}  
hold. Also, since $|c_{\lambda\mu}|= |u_{\mu}|-|u_{\lambda}| -1$ for all $\lambda, \mu\in \Lambda$, we have
\begin{eqnarray}
\psi _{\lambda} (x) \otimes_B \partial ^{\Omega}(u_{\lambda}) \!&=&\!\! \sum _{\nu<\lambda} \psi _{\lambda}(x) \otimes_B u_{\nu} c_{\nu\lambda}\notag\\
&=&\!\! \sum _{\nu<\lambda} (-1) ^{(|u_{\nu}|+|x|+1)(|u_{\lambda}|-|u_{\nu}|-1)} c_{\nu\lambda} \psi _{\lambda} (x) \otimes_B u_{\nu}.\label{eq20250411c}
\end{eqnarray}
Substituting~\eqref{eq20250411x} and~\eqref{eq20250411c} in the last equality of~\eqref{eq20250411s}, one can see that $(\partial ^{\Conn} (\psi)) (x)= 0$, as desired.

(ix)$\implies$(vii): If the Kodaira-Spencer homomorphism $\kappa$ is null homotopic, then there is a graded $B$-module homomorphism $h\colon \SDer _A (B) \to \SEnd _B(N)$ such that $\kappa (D) = -h( \partial ^{\Der} (D) ) +\partial ^{\End}(h(D))$, for all $D \in \SDer _A(B)$.  This means that
$$
[\partial ^N, \varphi (D)] - \varphi ([d^B, D]) =  [\partial ^N,  h (D)] - h ([d^B, D]).   
$$
Therefore, setting  $\psi (D) = (\varphi -h)(D)$, we see that  $\psi (D)$ is a $D$-connection on $N$ along $B$ satisfying the equality $ [\partial ^N , \psi (D)] = \psi ([d^B, D]) $. 
It then follows that $\psi\colon \SDer _A (B) \to \SConn (N)$  is a DG $B$-module homomorphism such that $\nu \circ \psi = \id_{\SDer _A (B)}$, and hence, $\E_N(B)$ splits.

(vii)$\implies$(ix): This implication is proved by following the steps of the proof of (ix)$\implies$(vii) in reverse.
%(i)$\Longleftrightarrow$(x): Note that, the classical Atiyah map $\bar\alpha$ is null homotopic if and only if the Atiyah homomorphism $\alpha$ is null homotopic. Hence, the assertion follows from our discussion in~\ref{Atiyah}. This concludes the proof of %the theorem.
\qed
\vspace{2mm}

Unlike Theorem~\ref{naive}, we do not know whether naïve liftability can be characterized by the conditions of Theorem~\ref{fesOX}. Hence, we conclude this paper with the following question:

\begin{question}
Under the assumptions of Theorem~\ref{fesOX}, are conditions (i)-(ix) equivalent to $N$ being naïvely liftable to $A$?
\end{question}

%%%%%%%%%%%%%%%%%%
%%%%%%%%%%%%%%%%%%
%%%%%%%%%%%%%%%%%%

%\appendix
%\section{}\label{appendixB}

\section*{Acknowledgments}
The second and third authors are grateful to the Department of Mathematical Sciences
at Georgia Southern University for the hospitality during their visit in March 2024 when a part of this study was carried out.

%\bibliography{../+new}
\providecommand{\bysame}{\leavevmode\hbox to3em{\hrulefill}\thinspace}
\providecommand{\MR}{\relax\ifhmode\unskip\space\fi MR }
% \MRhref is called by the amsart/book/proc definition of \MR.
\providecommand{\MRhref}[2]{%
  \href{http://www.ams.org/mathscinet-getitem?mr=#1}{#2}
}
\providecommand{\href}[2]{#2}

\end{document}